\newcommand{\lsection}[2][""]{%
    \ifthenelse{\equal{#1}{""}}{%
        \section{#2}
    }{%
        \renewcommand{\sectionmark}[1]{\markright{\thesection.\ \MakeUppercase{#1}}}
        \section{#2}
        \renewcommand{\sectionmark}[1]{\markright{\thesection.\ \MakeUppercase{##1}}}
    }
}
\newcommand{\lchapter}[2][""]{%
    \ifthenelse{\equal{#1}{""}}{%
        \chapter{#2}
    }{%
        \renewcommand{\chaptermark}[1]{\markboth{\MakeUppercase{\chaptername\ \thechapter.\ #1}}{}}
        \chapter{#2}
        \renewcommand{\chaptermark}[1]{\markboth{\MakeUppercase{\chaptername\ \thechapter.\ ##1}}{}}
    }
}
\def\a{\alpha}
\def\z{\zeta}
\def\iso{isomorphism}
\def\homo{homomorphism}
\def\vb{vector bundle}
\def\mfd{manifold}
\def\nbd{neighborhood}
\def\lra{\longrightarrow}
\def\F{{\cal F}}
\def\G{{\cal G}}
\def\E{{\cal E}}
\def\A{{\cal A}}
\def\U{{\cal U}}
\def\V{{\cal V}}
\def\Na{\mbox{\boldmath $\nabla$}}
\def\na{\nabla}
\def\o{\omega}
\def\t{\theta}
\def\vv {\vskip.2cm}
\def\C{{\mathbb C}}
\def\R{{\mathbb R}}
\def\O{{\cal O}}
\def\P{\mathbb P}
\def\bo1{{\text{\bold 1}}}
\def\Randmarke#1{\vadjust{\vbox to 0pt
                {\vss \hbox to \hsize{\hskip\hsize
                                     \quad #1\hss}\vskip3.5pt}}}
\def\Randpfeilmarke#1{\Randmarke{$\scriptscriptstyle\Leftarrow$#1}}
\def\bi#1 { $^{\bf!}$ {\{\sl #1 \}} \Randpfeilmarke{\bf !!}}
\newtheorem{theorem}{Theorem}[section]
\newtheorem{lemma}[theorem]{Lemma}
\newtheorem{corollary}[theorem]{Corollary}
\newtheorem{proposition}[theorem]{Proposition}
\newtheorem{exa}[theorem]{Example}
\newenvironment{example}{\begin{exa} \em}{\end{exa}}
\newtheorem{exas}[theorem]{Examples}
\newtheorem{prope}[theorem]{Property}
\newtheorem{defini}[theorem]{Definition}
\newenvironment{definition}{\begin{defini} \em}{\end{defini}}
\newtheorem{rema}[theorem]{Remark}
\newenvironment{remark}{\begin{rema} \em}{\end{rema}}
\newenvironment{equationth}{\stepcounter{theorem}\begin{equation}}{\end{equation}}
\newenvironment{proof}{{\noindent \sc Proof : } }{\mbox{ }\hfill$\Box$
                        \vspace{1.5ex} \par}
\def\hfleche#1#2{\smash{\mathop{\vbox{\hbox to 8.5mm{{#1}}}}\limits^{#2}}}
\newcommand{\Hom}{\mathop\textrm{Hom}\nolimits}
\title {\bf  \Large {Localization of Atiyah classes}}
\author{M. Abate, F. Bracci, T. Suwa and F. Tovena}
\begin{document}

\pagestyle{plain}


\maketitle

In \cite{At}, M. Atiyah introduced the notion of complex analytic connections and constructed characteristic classes of holomorphic vector bundles, in the Dolbeault cohomology, via the  obstruction to the existence of such a connection for the bundle.
In this paper,  we study localization problems of those classes, which we will call {\it Atiyah classes}.

We first reconstruct the classes using $C^\infty$ connections of type $(1,0)$ and developing a Chern-Weil type theory for these connections. In fact, the use of this type of connections  was already present in \cite{At} in the framework of principal bundles and the idea of incorporating this into the Chern-Weil theory had  been noted in \cite{BB1}. We further exploit
this approach. If we treat the Atiyah classes this way, we may represent a class by a $\bar\partial$-closed form, which is a part of the corresponding Chern form. Combined with  the \v Cech-Dolbeault cohomology, this viewpoint is particularly suited for localization problems.


In the case of Chern classes, localization problems arise naturally in many contexts. For example, taking the Poincar\'e-Hopf index theorem as a model case, a number of works had been done for the indices and residues of vector fields. Then a  general  residue theory for singular holomorphic foliations was developed by P. Baum and R. Bott  in \cite{BB2}.  This can be interpreted as a localization theory of the characteristic classes of the normal sheaf of the foliation based on the Bott vanishing theorem. The point here is that, once we have a certain  vanishing theorem, we have a localization theory. The index theorem of C.~Camacho and P. Sad in \cite{CS}, which was effectively used in their  proof of the existence of separatrices for holomorphic vector fields on the complex plane, was also interpreted and generalized in this context in \cite{LS}. There are some other residues for singular foliations that can be captured from this viewpoint and they
 are systematically treated in  \cite{Su2}. Here the combination of
the Chern-Weil theory and the \v Cech-de~Rham cohomology, originally due to \cite{Leh2}, is a very convenient tool to described the theory.

The philosophy and the techniques  above turned out to be very effective in other problems related to characteristic classes. For example, there is a work \cite{Ab}  in the  complex dynamical systems that corresponds to the aforementioned work of Camacho-Sad. The index theorem used there, which was originally proved in a different way, is shown to be proved,  in \cite{BTo}, in the framework of residue theory of singular foliations as above. This viewpoint gave unification of index and residue theories both for foliations and mappings and further generalizations (\cite{ABT0, ABT}).

They are also applied to the study of characteristic classes of singular varieties, that are summarized in  \cite{BSS}. The residue theory in this framework led to an analytic intersection theory on singular varieties \cite{Su7}, with an application in the discrete dynamical system on singular surfaces \cite{BSu1}. See \cite{Pe} for another development in this direction.

In this paper we try to make analogous studies in the case of Atiyah classes, compare with and complement the theories in the case of Chern classes.

In Section 1, we describe the Atiyah classes using connections of type $(1,0)$, as mentioned above, and compare these with the original definition in Section 2. Then we recall the \v Cech-Dolbeault cohomology theory in Section 3. In Section 4, we define Atiyah classes in the \v Cech-Dolbeault cohomology and explain the basic principle of localization. Each time we have a ``vanishing theorem", we have a corresponding localization theory and the associated residues. In Section 5, we briefly discuss the localization by sections, or more generally, by frames.
In Section 6, we prove a Bott type vanishing theorem for non-singular distributions, which lay foundation to the residue theory for singular distributions. As an important example, we give  the vanishing theorem coming from the ``Camacho-Sad action" in Section 7. In Section 8, we discuss singular holomorphic distributions and in Section 9, we give an example of Atiyah residues for some singular distribution.


\lsection{Atiyah classes}\label{atiyah}

For details of the Chern-Weil theory of characteristic classes of complex \vb s, we refer to  \cite{BB2}, \cite{Bo1}, \cite{MS}, \cite{Su2}. Here we use the notation in \cite{Su2} (with connection and curvature matrices transposed and $r$ and $\ell$  interchanged).

\subsection{Atiyah forms}

Let $M$ be a complex \mfd\ and  $E$  a holomorphic \vb\   over $M$ of rank $\ell$. For an open set $U$ in  $M$, we denote by
$A^p(U)$ the complex vector space of complex valued $C^{\infty}$ $p$-forms on $U$. Also,
we
let $A^p(U,E)$ be the vector space of ``$E$-valued $p$-forms" on $U$, {\it i.e.}, $C^{\infty}$ sections of the bundle
$\bigwedge^p{(T_{\R}^cM)}^*\otimes E$ on $U$, where ${(T_{\R}^cM)}^*$ denotes
the dual of the complexification of the real tangent bundle $T_{\R}M$ of $M$.
Thus $A^0(U)$ is the ring of $C^{\infty}$ functions and
$A^0(U,E)$ is the $A^0(U)$-module of $C^{\infty}$ sections of $E$ on $U$.

\begin{definition} A ($C^\infty$) {\it connection} for $E$  is a ${\C}$-linear map
\[
\na\colon A^0(M,E)\lra A^1(M,E)
\]
satisfying the Leibniz rule
$$
\na(fs)=df\otimes s+f\na(s)\ \ \ \ \text{for}\ \ \ f\in
A^0(M)\ \text{and}\ s\in A^0(M,E).
$$
\end{definition}

\begin{definition} For $r=1,\dots,\ell$, an \emph{$r$-frame} of $E$ on an open set $U$ is a collection $s^{(r)}=(s_1,\dots,s_r)$ of $r$ sections  of $E$ linearly independent everywhere on $U$. An $\ell$-frame is simply called a \emph{frame.}
\end{definition}

\begin{definition}\label{triviality}
 Let $\na$ be a connection  for $E$ on $U$, and $s^{(r)}=(s_1,\dots,s_r)$
 an $r$-frame of~$E$. We say that $\nabla$ is \emph{$s^{(r)}$-trivial} if  $\na(s_i)=O$ for~$i=1,\ldots,r$.
\end{definition}

A connection $\nabla$ for $ E$ induces a ${\C}$-linear map
\[
\na\colon A^1(M,E)\lra A^2(M,E)
\]
satisfying
\[
\na(\o\otimes s)=d\o\otimes s-\o\wedge \na(s)\ \ \ \ \hbox{for}\ \ \ \o\in A^1(M)\
\hbox{and}\ s\in A^0(M,E).
\]

The composition
\[
K=\na\circ \na\colon A^0(M,E)\lra A^2(M,E)
\]
is called the {\it curvature} of $\na$.
It is not difficult to see that $K$ is $A^0(M)$-linear; hence it can be thought of as a $C^\infty$ $2$-form with coefficients in the bundle $\Hom(E,E)$.

Notice that a~connection is a~local operator, {\it i.e.}, it is also defined on local sections. This fact
allows us to obtain local representations of a connection
and its curvature by matrices whose entries are differential forms.
Thus suppose that $\nabla$~is a~connection for $E$.
If $e^{(\ell)} = (e_1, \dots, e_{\ell})$ is a~frame of~$E$ on~$U$,
we may write, for $i = 1, \dots, \ell$,
\[
  \nabla(e_i) = \sum_{j=1}^{\ell} \theta^j_{i} \otimes e_j
  \qquad\textrm{with }\ \theta^j_{i}\ \ \textrm{in}\ \ A^1(U).
\]
The matrix $\theta = (\theta^j_{j})$ is the \emph{connection matrix} of~$\nabla$ with respect to~$e^{(\ell)}$.
Also, from the definition we get
\[
  K(e_i) = \sum_{j=1}^{\ell} \kappa^j_{i} \otimes e_j
  \qquad\textrm{with }\ \
    \kappa^j_{i}
  = d\theta^j_{i} + \sum_{k=1}^{\ell}\theta^j_{k}\wedge \theta^k_{i} .
\]
We call $\kappa = (\kappa^j_{i})$ the \emph{curvature matrix} of~$\nabla$
with respect to~$e^{(\ell)}$.
If $\tilde e^{(\ell)} = (\tilde e_1, \dots, \tilde e_{\ell})$ is another frame of~$E$ on~$\tilde U$,
we have $\tilde e_i = \sum_{j=1}^{\ell} a^j_{i} e_j$
for suitable functions $a^j_i\in C^{\infty}(U\cap\tilde U)$, and
the matrix $A = (a^j_{i})$ is non-singular at each point of $U \cap \tilde U$.
If we denote by~$\tilde\theta$ and~$\tilde\kappa$ the connection and curvature matrices
of~$\nabla$ with respect to~$\tilde e^{(\ell)}$ we have
\begin{equationth}\label{framechange}
  \tilde\theta = A^{-1} \cdot dA + A^{-1} \theta A
  \quad\text{and}\quad
  \tilde\kappa = A^{-1} \kappa A
  \quad \text{in}\quad
  U \cap\tilde U.
\end{equationth}%

Up to now $E$ could have been only a $C^\infty$ complex \vb. Now we use the assumption that $E$ is holomorphic.

\begin{definition} A connection $\na$ for $E$ is \emph{of type $(1,0)$} (or a \emph{$(1,0)$-connection\/}) if the entries of the connection matrix with respect to a holomorphic frame are forms of type $(1,0)$.
\end{definition}

\begin{remark}  (1)  It is easy to check that the above property does not depend on the choice of the
holomorphic frame.

\vv

\noindent
(2) A holomorphic \vb\ always admits a $(1,0)$-connection. In fact let $\V=\{V_\lambda\}$ be an open covering of $M$ trivializing $E$. For each $\lambda$, let $\na_\lambda$ be the connection trivial with respect to some holomorphic frame on $V_\lambda$. If we take a partition of unity $\{\rho_\lambda\}$ subordinate to $\V$ and set $\na=\sum_\lambda\rho_\lambda\na_\lambda$, then $\nabla$ is a $(1,0)$-connection for $E$.
\end{remark}

If $\na$ is a $(1,0)$-connection for $E$, we may write its curvature $K$ as
\[
K=K^{2,0}+K^{1,1}
\]
with $K^{2,0}$ and $K^{1,1}$, respectively, a $(2,0)$-form and a $(1,1)$-form with coefficients in $\Hom(E,E)$. Locally, if $\theta$ and $\kappa$ are respectively the connection and the curvature
matrices of $\nabla$ with respect to a (local) holomorphic frame of $E$, then
we can decompose $\kappa=\kappa^{2,0}+\kappa^{1,1}$ according to type, and $K^{2,0}$
and $K^{1,1}$ are respectively represented by
\begin{equationth}\label{curvform}
\kappa^{2,0}=\partial\theta+\theta\wedge\theta\qquad\text{and}\qquad
\kappa^{1,1}=\bar\partial\theta\;.
\end{equationth}%
Thus $K^{1,1}$, being locally $\bar\partial$-exact, is a $\bar\partial$-closed $(1,1)$-form with coefficients in $\Hom(E,E)$.

With respect to another holomorphic frame, $K^{1,1}$ is represented by a  matrix similar to $\kappa^{1,1}$ (cf. (\ref{framechange})). Thus for each elementary symmetric polynomial $\sigma_p$ (with $p=1,2,\dots$)
we may define a $\bar\partial$-closed $C^{\infty}$
$(p,p)$-form $\sigma_p(K^{1,1})$ on $M$. Locally it is given by  $\sigma_p(\kappa^{1,1})$, which is the coefficient of $t^p$ in the expansion
\[
\det(I+t\kappa^{1,1})=1+\sigma_1(\kappa^{1,1})t+\cdots+\sigma_p(\kappa^{1,1})t^p+\cdots.
\]
In particular, $\sigma_1(\kappa^{1,1})={\rm tr}(\kappa^{1,1})$ and $\sigma_\ell(\kappa^{1,1})=\det(\kappa^{1,1})$.

\begin{definition} Let $\nabla$ be a $(1,0)$-connection for a holomorphic vector bundle~$E$
of rank~$\ell$. For $p=1,\ldots,\ell$, we define
the \emph{$p$-th Atiyah form}~$a^p(\nabla)$ of~$\nabla$ by
\[
a^p(\na)=\left(\frac{\sqrt{-1}}{2\pi}\right)^p\sigma_p(K^{1,1}).
\]
It is a $\bar\partial$-closed $(p,p)$-form on $M$.

More generally, if $\varphi$ is a symmetric homogeneous polynomial of degree $d$, we may write
$\varphi=P(\sigma_1,\sigma_2,\dots)$ for a suitable polynomial $P$. Then we define
\emph{the Atiyah form~$\varphi^A(\nabla)$ of $\na$ associated to $\varphi$} by
\[
\varphi^A(\na)=P(a^1(\na),a^2(\na),\ldots);
\]
it is a $\bar\partial$-closed $(d,d)$-form on $M$.
\end{definition}

\begin{remark}  The $p$-th Chern form $c^p(\na)$ of $\na$ is defined by
\[
c^p(\na)=\left(\frac{\sqrt{-1}}{2\pi}\right)^p\sigma_p(\kappa),
\]
which is a closed $(2p)$-form having components of bidegrees $(2p,0),\ldots, (p,p)$. The Atiyah form $a^p(\na)$ is then the $(p,p)$-component of $c^p(\na)$. In particular, $a^n(\na)=c^n(\na)$, where $n$ denotes the dimension of $M$.

More generally, the Atiyah form $\varphi^A(\na)$ of $\na$ associated to a symmetric homogeneous polynomial~$\varphi$ of degree~$d$ is the component of type $(d,d)$ of the Chern form
$\varphi(\na)=P(c^1(\na),c^2(\na),\dots)$ associated to~$\varphi$. Again, if $d=n$ then
$\varphi^A(\nabla)=\varphi(\nabla)$.
\end{remark}

\subsection{Atiyah classes}\label{subA}

Let $E$ be a holomorphic \vb\ over a complex \mfd\ $M$.
As in the case of Chern forms, to any set of at least two $(1,0)$-connections for~$E$
one can associate a \emph{difference form} starting from the usual Bott difference
forms.
Here we recall the construction in the case of two connections and refer to \cite[Proposition 5.4]{Su8} for the general case.

Thus, given two $(1,0)$-connections $\na_0$ and $\na_1$ for $ E$,
we consider the vector bundle
$E\times \mathbb R\to  M\times \mathbb R$ and define the connection $\tilde\na$  for it  by
$\tilde\na=(1-t)\na_0+t\na_1$,
where $t$ denotes
a coordinate on $\mathbb R$. Denoting by $[0,1]$ the unit interval
and by $\pi:M\times [0,1]\to M$ the projection, we have the integration along the fiber
\[
\pi_*:A^{2p}(M\times[0,1])\lra A^{2p-1}(M).
\]
Then we set
\[
c^p(\na_0,\na_1)=\pi_*(c^p(\tilde\na)).
\]

The Atiyah difference form $a^p(\na_0,\na_1)$ is the $(p,p-1)$-component of $c^p(\na_0,\na_1)$.
It is alternating in $\na_0$ and $\na_1$ and satisfies
\begin{equationth}\label{difference}
\bar\partial a^p(\na_0,\na_1)=a^p(\na_1)-a^p(\na_0).
\end{equationth}%

This in particular shows that, if $\na$ is a $(1,0)$-connection for $E$, the class of $a^p(\na)$ in $H^{p,p}_{\bar\partial}(M)$ does not depend on the choice of $\na$.

Similarly, if $\varphi$ is a symmetric homogeneous
polynomial of degree~$d$ then there is a $(d,d-1)$-form $\varphi^A(\na_0,\na_1)$ alternating
in~$\nabla_0$ and $\nabla_1$ and satisfying
\[
\varphi^A(\na_1)-\varphi^A(\na_0)=\bar\partial \varphi^A(\na_0,\na_1).
\]

Then we can introduce the following definition\,:

\begin{definition} Let $E$ be a holomorphic vector bundle~$E$
of rank~$\ell$. For $p=1,\ldots,\ell$ the \emph{$p$-th Atiyah class} $a^p(E)$ of $E$  is the class
represented by $a^p(\na)$ in $H^{p,p}_{\bar\partial}(M)$, where $\na$ is an arbitrary $(1,0)$-connection for $ E$.

Similarly, if $\varphi$ is a symmetric homogeneous polynomial of degree $d$,
the \emph{Atiyah class $\varphi^A(E)$ of $E$ associated to $\varphi$}
is the class  of $\varphi^A(\nabla)$ in $H^{d,d}_{\bar\partial}(M)$, where $\nabla$ is an arbitrary $(1,0)$-connection for $ E$.
\end{definition}

\begin{remark}\label{top}  If $n$ denotes the dimension of $M$, there is a canonical surjective map $H^{n,n}_{\bar\partial}(M)\lra H^{2n}_d(M)$, which assigns the class of a form $\o$ to the class of $\o$. If $d=n$, then $\varphi^A(\nabla)=\varphi(\nabla)$ for any
$(1,0)$-connection $\na$ for $ E$ and the image of $\varphi^A(E)$ by the above map is $\varphi(E)$. In particular, if $M$ is compact, $\int_M\varphi^A(E)=\int_M\varphi(E)$.

 Moreover, if $d=n$,  then $\varphi^A(\nabla_0,\nabla_1)$ also coincides
with the usual Bott difference form~$\varphi(\nabla_0,\nabla_1)$ for any pair of
$(1,0)$-connections $\na_0$,~$\nabla_1$ for $ E$.
\end{remark}

\subsection{Atiyah classes on compact K\"ahler manifolds}

Let $M$ be complex manifold (not necessarily K\"ahler) and  $E$
a holomorphic vector bundle on $M$. Let $h$ be any Hermitian
metric on $E$ and let $\nabla^h$ be the associated metric
connection, {\it i.e.}, $\nabla^h$ is the unique $(1,0)$-connection compatible with $h$. The curvature $K$ of $\nabla$ is then
of type $(1,1)$, and hence
\[
c^p(\nabla^h)=a^p(\nabla^h) \qquad \textrm{for all }p\geq 1.
\]
In other words, Atiyah and Chern classes of the same degree can be represented by
the same form. Of course, as classes they are different, because they belong to
two different cohomology groups\,:
$c_p(E)=[c^p(\nabla^h)]\in H^{2p}_{d}(M)$, the de~Rham cohomology of $M$,  while
$a^p(E)=[a^p(\nabla^h)]\in H^{p,p}_{\bar\partial}(M)$, the Dolbeault cohomology of~$M$.

However, if $M$ is compact K\"ahler, the Hodge decomposition yields a canonical
injection $H^{p,p}_{\bar\partial}(M)\hookrightarrow H^{2p}_d(M)$, and hence
we obtain the following useful relation\,:

\begin{proposition}\label{compactKAtiyah}
Let $M$ be a compact K\"ahler manifold and  $E$ a
holomorphic vector bundle on $M$. Let
$H\colon H^{p,p}_{\bar\partial}(M)\to H^{2p}_d(M)$ be the injection
given by the Hodge decomposition. Then
\[
H\bigl(a^p(E)\bigr)=c^p(E)\qquad\textrm{for all }\ \ p\geq 1.
\]
\end{proposition}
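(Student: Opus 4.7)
The strategy is to pick a particularly well-behaved $(1,0)$-connection, namely the Chern connection associated with a Hermitian metric, and observe that for such a connection the Atiyah form and the Chern form coincide, so that the identification $H$ between the two cohomology classes reduces to tracing the definition of the Hodge injection.

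First I would choose an arbitrary Hermitian metric $h$ on $E$ and let $\nabla^h$ denote its metric (Chern) connection. As recalled at the beginning of the subsection, $\nabla^h$ is a $(1,0)$-connection whose curvature $K$ is of pure type $(1,1)$, i.e., $K^{2,0}=0$. Plugging this into the formulas defining $c^p(\nabla^h)$ and $a^p(\nabla^h)$, one obtains the equality of forms
\[
c^p(\nabla^h)=a^p(\nabla^h)\qquad\text{for every }p\ge 1.
\]
By definition, the left-hand side represents $c^p(E)$ in $H^{2p}_d(M)$, while the right-hand side represents $a^p(E)$ in $H^{p,p}_{\bar\partial}(M)$.

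Next I would recall how the injection $H$ arises on a compact K\"ahler manifold. The Hodge decomposition theorem gives a direct sum decomposition $H^{2p}_d(M)\cong\bigoplus_{r+s=2p}H^{r,s}_{\bar\partial}(M)$, and $H$ is precisely the inclusion of the $(p,p)$-summand. At the level of forms, if $\omega$ is a $\bar\partial$-closed $(p,p)$-form which happens also to be $d$-closed (equivalently, which is represented by a harmonic form in its Dolbeault class), then $H$ sends its Dolbeault class $[\omega]_{\bar\partial}$ to its de~Rham class $[\omega]_d$; this is the content of the $\partial\bar\partial$-lemma/Hodge identification on compact K\"ahler manifolds. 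I would apply this to $\omega=a^p(\nabla^h)=c^p(\nabla^h)$, which is $d$-closed by Chern--Weil theory, to conclude that
\[
H\bigl(a^p(E)\bigr)=H\bigl([a^p(\nabla^h)]_{\bar\partial}\bigr)=[c^p(\nabla^h)]_d=c^p(E).
\]

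The only non-routine step is the identification of $H$ on the level of forms: one needs to verify that a $d$-closed, $\bar\partial$-closed $(p,p)$-form represents, under Hodge, both classes simultaneously, which is the point where the K\"ahler hypothesis (through harmonic representatives agreeing for $d$, $\partial$, $\bar\partial$) is used. Everything else is a direct consequence of the fact that the Chern connection kills the $(2,0)$-part of the curvature.
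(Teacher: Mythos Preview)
Your proposal is correct and follows essentially the same approach as the paper: choose the Chern connection of a Hermitian metric so that the curvature is pure $(1,1)$, whence $a^p(\nabla^h)=c^p(\nabla^h)$ as forms, and then invoke the Hodge injection on a compact K\"ahler manifold to identify the Dolbeault and de~Rham classes of this common form. Your added justification via the $\partial\bar\partial$-lemma for why $H([\omega]_{\bar\partial})=[\omega]_d$ when $\omega$ is a $d$-closed $(p,p)$-form is a welcome clarification that the paper leaves implicit.
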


\lsection{Atiyah classes via complex analytic connections}\label{atiyahold}

Atiyah classes were originally introduced by Atiyah in \cite{At}, with a different construction.
In this section we show that our definition yields the same classes.

Let $M$ be a complex \mfd\ and $\O$ the sheaf of germs of holomorphic functions on $M$. For a holomorphic \vb\ $E$  over $M$ we denote by
$\E={\cal O}(E)$ the sheaf of germs of holomorphic sections of $E$. We also denote by $\Theta={\cal O}(TM)$ and $\Omega^1={\cal O}(T^*M)$ the sheaves of germs holomorphic vector fields and of $1$-forms on $M$. All tensor products
in this section will be over the sheaf~$\O$.

\begin{definition}\label{caconnection}
A \emph{holomorphic} (or \emph{complex analytic\/}) {\it  connection} for $ E$ is a  \homo\ of
sheaves of $\C$-vector spaces
\[
\mbox{\boldmath $\nabla$}\colon\E\longrightarrow \Omega^1\otimes\E
\]
satisfying
\[
\Na(fs)=df\otimes s+f\Na(s)\quad\text{for}\quad f\in{\cal O}\  \text{and}\  s\in\E.
\]

If $e^{(r)}=(e_1,\ldots,e_r)$ is a local holomorphic $r$-frame of~$E$, we shall say that
$\Na$ is \emph{$e^{(r)}$-trivial} if $\Na e_j\equiv O$ for $j=1,\ldots,r$.
\end{definition}

\begin{remark}\label{holoCinfty}
A holomorphic connection $\Na$ on a holomorphic vector bundle $E$ induces naturally
a $(1,0)$-connection $\na$. In fact, let $s$ be a $C^\infty$ section of $E$. Let $U$ be an open set trivializing $E$ and let $(e_1,\dots,e_\ell)$ be a holomorphic frame on $U$. Write $s=\sum_{i=1}^\ell f^i e_i$ for suitable $f^i\in C^\infty(U)$, and set $\na s=\sum_{i=1}^\ell (df^i \otimes e_i+f^i\Na(e_i))$. It is easy to check that the definition does not depend on the choice of the frame.

Conversely, a $(1,0)$-connection $\na$ such that $(\na s)(u)$ is holomorphic wherever $s$ and $u$ are holomorphic clearly determines a holomorphic connection.
\end{remark}

Following Atiyah \cite{At}, we set
\[
D(\E)=\E\oplus (\Omega^1\otimes\E),
\]
which is a direct sum as a sheaf of $\C$-vector spaces, endowed with the ${\cal O}$-module structure given by
\[
f\cdot(s,\sigma)=(fs, df\otimes s+f\sigma).
\]
In particular, we have the following exact sequence of (locally free) ${\cal O}$-modules
\begin{equationth}\label{atiyahseq}
O\lra \Omega^1\otimes\E\buildrel{\iota}\over\lra D(\E)\buildrel{\rho}\over\lra \E\lra O.
\end{equationth}%
A \emph{splitting} of this sequence is a morphism $\eta\colon\E\to D(\E)$ of $\O$-modules
such that $\rho\circ\eta=\textrm{id}$. Then

\begin{lemma}[\cite{At}]
\label{split-co}
Let $E$ be a holomorphic vector bundle on a complex manifold~$M$.
A morphism $\eta\colon\E\to D(\E)$ is a splitting of $(\ref{atiyahseq})$ if and only if it is of the form $\eta(s)=\bigl(s,\Na(s)\bigr)$, where $\Na$ is a holomorphic  connection for $ E$. Thus $E$ admits a holomorphic connection if and only if $(\ref{atiyahseq})$ splits.
\end{lemma}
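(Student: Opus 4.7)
The plan is to prove the equivalence by giving explicit constructions in both directions, relying almost entirely on the explicit $\O$-module structure on $D(\E)$, namely $f\cdot(s,\sigma)=(fs,df\otimes s+f\sigma)$. Nothing deep is at stake; the statement is really just a tautological reformulation, and the whole proof amounts to unwinding the Leibniz rule.

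For the \emph{if} direction, I start from a holomorphic connection $\Na\colon\E\to\Omega^1\otimes\E$ and set $\eta(s)=(s,\Na(s))$. The identity $\rho\circ\eta=\mathrm{id}$ is immediate from the definition of $\rho$ as projection onto the first factor. To verify that $\eta$ is a morphism of $\O$-modules, I compute
\[
\eta(fs)=\bigl(fs,\Na(fs)\bigr)=\bigl(fs,df\otimes s+f\Na(s)\bigr),
\]
where the second equality uses the Leibniz rule for $\Na$, and I observe that the right-hand side is exactly $f\cdot(s,\Na(s))=f\cdot\eta(s)$ by definition of the $\O$-module structure on $D(\E)$. Additivity is obvious.

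For the \emph{only if} direction, suppose $\eta\colon\E\to D(\E)$ is a splitting. Since $\rho\circ\eta=\mathrm{id}$, each $\eta(s)$ has first component equal to $s$, so there is a well-defined map of sheaves of $\C$-vector spaces $\Na\colon\E\to\Omega^1\otimes\E$ such that $\eta(s)=(s,\Na(s))$; it is $\C$-linear because $\eta$ is (every $\O$-module morphism is a fortiori $\C$-linear) and projection to the second factor is $\C$-linear. To check the Leibniz rule, I apply $\eta$ to $fs$ in two ways: on one hand $\eta(fs)=(fs,\Na(fs))$, and on the other
\[
\eta(fs)=f\cdot\eta(s)=f\cdot(s,\Na(s))=\bigl(fs,\,df\otimes s+f\Na(s)\bigr).
\]
Equating second components yields $\Na(fs)=df\otimes s+f\Na(s)$, so $\Na$ is a holomorphic connection.

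The two constructions are visibly inverse to each other, which proves the equivalence; the final assertion that $E$ admits a holomorphic connection iff \eqref{atiyahseq} splits is then a direct consequence. The only subtlety worth being careful about is remembering that the $\O$-module structure on $D(\E)$ is the \emph{twisted} one built from the Leibniz rule, rather than the naive product structure on $\E\oplus(\Omega^1\otimes\E)$; once this is kept straight the computation is mechanical.
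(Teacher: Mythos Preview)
Your proof is correct and is exactly the standard argument; the only point to note is that the paper does not actually supply a proof of this lemma but merely cites \cite{At}, so there is no ``paper's own proof'' to compare against. Your write-up would serve perfectly well as the omitted verification.
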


The following is also easy to see\,:

\begin{lemma}[\cite{At}]
Let $\Na$ be a holomorphic connection for a holomorphic vector bundle~$E$. If $\xi\in\Hom_\O(\E,\Omega^1\otimes\E)$
then $\Na+\xi$ is a holomorphic connection for $ E$. Conversely,  every  holomorphic connection for $ E$ is of this form.
\end{lemma}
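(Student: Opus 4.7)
The plan is the standard affine-space argument: connections form a torsor under $\Hom_\O(\E,\Omega^1\otimes\E)$. I would establish the two directions separately, each reducing to a direct check of the defining properties in Definition~\ref{caconnection}.

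For the forward direction, assume $\Na$ is a holomorphic connection and $\xi\in\Hom_\O(\E,\Omega^1\otimes\E)$. The map $\Na+\xi\colon\E\to\Omega^1\otimes\E$ is clearly a $\C$-linear homomorphism of sheaves, so I only need to verify the Leibniz rule. For $f\in\O$ and $s\in\E$, I would compute
\[
(\Na+\xi)(fs)=\Na(fs)+\xi(fs)=df\otimes s+f\Na(s)+f\xi(s)=df\otimes s+f(\Na+\xi)(s),
\]
where the crucial step $\xi(fs)=f\xi(s)$ uses that $\xi$ is $\O$-linear (the differential term $df\otimes s$ arises only once, from $\Na$). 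This shows $\Na+\xi$ is a holomorphic connection.

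For the converse, let $\Na'$ be any holomorphic connection for $E$ and set $\xi:=\Na'-\Na$. As a difference of $\C$-linear maps, $\xi$ is $\C$-linear. To show that it is actually $\O$-linear, I would use the Leibniz rule for both $\Na$ and $\Na'$\,:
\[
\xi(fs)=\Na'(fs)-\Na(fs)=\bigl(df\otimes s+f\Na'(s)\bigr)-\bigl(df\otimes s+f\Na(s)\bigr)=f\xi(s),
\]
so the $df\otimes s$ terms cancel and $\xi\in\Hom_\O(\E,\Omega^1\otimes\E)$. By construction $\Na'=\Na+\xi$, giving the stated representation.

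There is really no substantial obstacle here; the argument is purely formal. The only conceptual content is the observation that the Leibniz rule is the \emph{same} for every connection, so when two connections are subtracted the non-$\O$-linear piece $df\otimes s$ disappears, leaving an honest $\O$-module homomorphism. This is the same reasoning that makes the space of $C^\infty$ connections on $E$ an affine space over $A^1(M,\Hom(E,E))$; here we only observe that it restricts properly to the holomorphic setting because $\Na$, $\Na'$ and hence $\xi$ send holomorphic sections to holomorphic $\Omega^1\otimes\E$-valued sections.
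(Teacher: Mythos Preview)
Your argument is correct and is exactly the standard verification the paper has in mind; indeed, the paper does not spell out a proof at all, merely remarking that the lemma ``is also easy to see.'' There is nothing to add or compare.
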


The sequence (\ref{atiyahseq}) determines an element $b(E)\in
H^1\bigl(M, \Hom(\E,\Omega^1\otimes\E)\bigr)$ as follows. First, applying the functor
$\Hom(\E,\cdot)$ to (\ref{atiyahseq}) we get the exact sequence
\[
O\lra \Hom(\E,\Omega^1\otimes\E)\lra \Hom\bigl(\E,D(\E)\bigr)\lra \Hom(\E,\E)\lra O,
\]
and thus the connecting \homo
\[
\delta\colon H^0\bigl(M, \Hom(\E,\E)\bigr)\lra H^1\bigl(M, \Hom(\E,\Omega^1\otimes\E)\bigr).
\]
Then $b(E)=\delta(\textrm{id})\in H^1\bigl(M, \Hom(\E,\Omega^1\otimes\E)\bigr)$.
It is not difficult to prove the following

\begin{lemma}[\cite{At}]
\label{holoAtiyah} A holomorphic vector bundle $E$ admits a holomorphic connection if and only if $b(E)=0$.
\end{lemma}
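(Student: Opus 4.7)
The plan is to reduce the statement to Lemma~\ref{split-co} and then invoke the standard principle that an extension of locally free $\O$-modules splits if and only if the image of the identity under the associated connecting map vanishes; here that image is, by construction, precisely $b(E)$.

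First, by Lemma~\ref{split-co}, the existence of a holomorphic connection for~$E$ is equivalent to the existence of an $\O$-linear splitting $\eta\colon\E\to D(\E)$ of the Atiyah sequence~(\ref{atiyahseq}). So the task becomes: the sequence~(\ref{atiyahseq}) admits an $\O$-linear splitting if and only if $b(E)=0$.

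Next, since $\E$ is locally free, applying $\Hom(\E,-)$ to~(\ref{atiyahseq}) preserves exactness at the sheaf level, yielding the short exact sequence of sheaves used in the text to define $b(E)$. The associated long exact cohomology sequence contains the segment
\[
H^0\bigl(M,\Hom(\E,D(\E))\bigr)\xrightarrow{\rho_*}H^0\bigl(M,\Hom(\E,\E)\bigr)\xrightarrow{\delta}H^1\bigl(M,\Hom(\E,\Omega^1\otimes\E)\bigr).
\]
A global splitting $\eta$ of~(\ref{atiyahseq}) is, by definition, an element of the middle group whose image under $\rho_*$ is $\textrm{id}_\E$. By exactness of the sequence, such a preimage exists if and only if $\delta(\textrm{id}_\E)=0$, which is precisely the condition $b(E)=0$. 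Chaining this with the reduction above gives the desired equivalence.

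The only delicate point, beyond formal bookkeeping, is verifying that $\Hom(\E,-)$ applied to~(\ref{atiyahseq}) remains surjective on stalks: this uses local freeness of $\E$, which guarantees that locally $\Hom(\E,D(\E))\to\Hom(\E,\E)$ is a split surjection (a local holomorphic frame trivializes both $\E$ and the extension). Once this is in hand, the rest is the universal property of the connecting homomorphism.
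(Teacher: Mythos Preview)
Your argument is correct and is exactly the standard extension-class proof: reduce to splitting via Lemma~\ref{split-co}, then use exactness of the long cohomology sequence at $H^0\bigl(M,\Hom(\E,\E)\bigr)$ to see that $\textrm{id}_\E$ lifts to a global section of $\Hom\bigl(\E,D(\E)\bigr)$ precisely when $\delta(\textrm{id}_\E)=b(E)$ vanishes. One small wording slip: when you write ``an element of the middle group whose image under $\rho_*$ is $\textrm{id}_\E$'', you mean an element of the \emph{leftmost} group displayed, namely $H^0\bigl(M,\Hom(\E,D(\E))\bigr)$; the identity $\textrm{id}_\E$ sits in the middle term $H^0\bigl(M,\Hom(\E,\E)\bigr)$, and $\eta$ is its preimage.

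As for comparison: the paper does not supply its own proof of this lemma---it merely records it as ``not difficult to prove'' with a citation to \cite{At}. Your argument is precisely the one implicit in the paper's setup (the short exact sequence obtained by applying $\Hom(\E,\cdot)$ and the connecting map $\delta$ are introduced there exactly for this purpose), and it is the argument in Atiyah's original paper. So there is no alternative approach to contrast with.
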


Now, we have the Dolbeault \iso
\[
H^1\bigl(M, \Hom(\E,\Omega^1\otimes\E)\bigr)=H^1\bigl(M, \Omega^1\otimes\Hom(\E,\E)\bigr)\simeq
H^{1,1}_{\bar\partial}\bigl(M, \Hom(E,E)\bigr).
\]
Let $a(E)$ denote the class
in $H^{1,1}_{\bar\partial}\bigl(M, \Hom(E,E)\bigr)$ corresponding to $-b(E)$ via the above \iso\ (cf. \cite[Theorem 5]{At}). Then the original Atiyah class of type $(p,p)$ is defined as
\[
a^p_{\textrm{or}}(E)=\left(\frac{\sqrt{-1}}{2\pi}\right)^p\sigma_p(a(E));
\]
we shall show that $a^p_{\textrm{or}}(E)=a^p(E)$ for all $p\ge 1$. To do so we need
some definitions.

\begin{definition} Let $\V=\{V_\lambda\}$ be an open covering of $M$. A \emph{$\V$-splitting} of (\ref{atiyahseq}) is a collection $\{\eta^\lambda\}$ of splittings $\eta^\lambda$ of \eqref{atiyahseq} on each $V_\lambda$. A \emph{holomorphic $\V$-connection} for $ E$ is a collection $\{\Na^\lambda\}$ of holomorphic connections $\Na^\lambda$ for $ E|_{V_\lambda}$.
\end{definition}

By Lemma \ref{split-co}, a  $\V$-splitting determines a holomorphic $\V$-connection and vice versa.
Furthermore, every holomorphic vector bundle $E$ admits a  holomorphic $\V$-connection for some open covering~$\V$. In fact, let $\V=\{V_\lambda\}$ be a covering trivializing $E$; then take as $\Na^\lambda$ a holomorphic connection which is trivial with respect to some holomorphic frame of~$E$ on $V_\lambda$.

\begin{definition}
We shall call \emph{$\bar\partial$-curvature}  of $E$ a $\bar\partial$-closed $(1,1)$-form with coefficients in $\Hom(E,E)$ representing the class $a(E)$.
\end{definition}


The next theorem shows that we can obtain a $\bar\partial$-curvature as the $(1,1)$-component
of the curvature of a suitable $(1,0)$-connection\,:

\begin{theorem}\label{coincidence}
Let $E$ be a holomorphic vector bundle over a complex manifold~$M$. Then
a holomorphic $\V$-connection for $ E$ determines a $(1,0)$-connection $\na$ for $ E$ such that  the $(1,1)$-component of the curvature of $\na$ is a $\bar\partial$-curvature.
\end{theorem}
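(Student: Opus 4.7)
The plan is to build $\nabla$ from the given holomorphic $\V$-connection by a partition-of-unity gluing and then to identify the $(1,1)$-part of its curvature, computed in a local holomorphic frame, with the \v Cech-to-Dolbeault image of the cocycle that defines $b(E)$.

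First, let $\na^\lambda$ be the $(1,0)$-connection on $V_\lambda$ associated to $\Na^\lambda$ via Remark~\ref{holoCinfty}. Choose a $C^\infty$ partition of unity $\{\rho_\lambda\}$ subordinate to $\V$ and set
\[
\na=\sum_\lambda\rho_\lambda\na^\lambda,
\]
which is a $(1,0)$-connection on $E$ by remark~(2) following the definition of $(1,0)$-connection.

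Next, I would compute $K^{1,1}$ in a local holomorphic frame. Fix a holomorphic frame $e^{(\ell)}$ of $E$ on some $V_\mu$. Because each $\na^\lambda$ comes from a holomorphic connection, its connection matrix $\theta^\lambda$ in the frame $e^{(\ell)}$ on $V_\lambda\cap V_\mu$ is a matrix of \emph{holomorphic} $(1,0)$-forms. Hence the connection matrix of $\na$ on $V_\mu$ is $\theta=\sum_\lambda\rho_\lambda\theta^\lambda$, and formula~\eqref{curvform} together with $\sum_\lambda\bar\partial\rho_\lambda=0$ yields
\[
\kappa^{1,1}\big|_{V_\mu}=\bar\partial\theta=\sum_\lambda\bar\partial\rho_\lambda\wedge\theta^\lambda=\sum_\lambda\bar\partial\rho_\lambda\wedge(\theta^\lambda-\theta^\mu).
\]

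Finally, I would match this expression with the \v Cech-to-Dolbeault image of $b(E)$. On $V_\lambda\cap V_\mu$ the $\O$-linear difference $\Na^\lambda-\Na^\mu$ is a holomorphic section of $\Omega^1\otimes\Hom(\E,\E)$ whose matrix in the frame $e^{(\ell)}$ is precisely $\theta^\lambda-\theta^\mu$; by the very definition of the connecting homomorphism $\delta$ applied to the splittings $\eta^\lambda$ of \eqref{atiyahseq}, the \v Cech $1$-cocycle $\{\Na^\lambda-\Na^\mu\}$ represents $b(E)=\delta(\textrm{id})$. Running the standard \v Cech-to-Dolbeault isomorphism on this cocycle with the chosen $\{\rho_\lambda\}$ produces a global $\bar\partial$-closed $(1,1)$-form whose restriction to $V_\mu$ is exactly the right-hand side of the display above, matching $\kappa^{1,1}|_{V_\mu}$. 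Combined with the convention $a(E)=-b(E)$, this shows $K^{1,1}$ represents $a(E)$ and is therefore a $\bar\partial$-curvature. The main obstacle is the bookkeeping in this last step: one must track signs through the \v Cech coboundary convention, the connecting homomorphism~$\delta$, and the chosen \v Cech-to-Dolbeault isomorphism so that they conspire with $a(E)=-b(E)$, and verify via~\eqref{framechange} that the local matrix identities genuinely patch to a global statement in $A^{1,1}(M,\Hom(E,E))$.
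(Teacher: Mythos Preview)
Your approach is essentially the same as the paper's, just specialized from the outset to the partition-of-unity choice that the paper records afterwards in Remark~\ref{explicit}: the paper first writes $\xi^{\lambda\mu}=\Na^\lambda-\Na^\mu$, splits it by an arbitrary $0$-cochain $\{\tau^\lambda\}$ in the fine sheaf $\A^{1,0}(\Hom(E,E))$, sets $\na=\Na^\lambda+\tau^\lambda$, and identifies $K^{1,1}|_{V_\lambda}=\bar\partial\tau^\lambda$; taking $\tau^\lambda=\sum_\nu\rho_\nu\xi^{\nu\lambda}$ recovers exactly your $\na=\sum_\lambda\rho_\lambda\na^\lambda$ and your formula $\kappa^{1,1}|_{V_\mu}=\sum_\lambda\bar\partial\rho_\lambda\wedge(\theta^\lambda-\theta^\mu)$. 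One caution on the sign bookkeeping you flag: in the paper's conventions the cocycle $\{\Na^\lambda-\Na^\mu\}$ represents $-b(E)$ (not $b(E)$ as you state), so its Dolbeault image is directly $a(E)$ without an extra sign flip; make sure your chase through $\delta$ and the \v Cech--Dolbeault isomorphism lands on that, or you will be off by a sign in the final identification.
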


\begin{proof} Let $\{\Na^\lambda\}$ be a $\V$-connection for $ E$ with respect to
a  (sufficiently fine) open covering $\V=\{V_\lambda\}$ of~$M$.
On $V_\lambda\cap V_\mu$ the difference $\xi^{\lambda\mu}=\Na^\lambda-\Na^\mu$ is an $\O$-morphism from $\E$ to $\Omega^1\otimes\E$, and the collection $\xi=\{\xi^{\lambda\mu}\}$ is a $1$-cocycle  on $\V$ representing  $-b(E)$.

We  denote by $\A^{p,q}\bigl(\Hom(E,E)\bigr)$ the sheaf of germs of smooth forms of type $(p,q)$ with coefficients in the bundle $\Hom(E,E)$; in particular, we may think of $\Hom(\E,\Omega^1\otimes \E)=\Omega^1\otimes\Hom(\E,\E)$ as a subsheaf of $\A^{1,0}\bigl(\Hom(E,E)\bigr)$. Since the sheaf $\A^{p,q}\bigl(\Hom(E,E)\bigr)$ is fine, there exists a $0$-cochain $\{\tau^\lambda\}$ of $\A^{1,0}\bigl(\Hom(E,E)\bigr)$ on $\V$ such that
\[
\xi^{\lambda\mu}=\tau^\mu-\tau^\lambda\quad\text{on}\ V_\lambda\cap V_\mu.
\]
Hence
\[
\Na^\lambda+\tau^\lambda=\Na^\mu+\tau^\mu\quad\text{on}\quad V_\lambda\cap V_\mu.
\]
In this way we have defined a global $(1,0)$-connection $\na$ which coincides with $\Na^\lambda+\tau^\lambda$ on $V_\lambda$.

Since the forms $\xi^{\lambda\mu}$ are holomorphic,
on $V_\lambda\cap V_\mu$ we have $\bar\partial\tau^\lambda=\bar\partial\tau^\mu$. Hence we get a global $\bar\partial$-closed $(1,1)$-form $\omega$ with coefficients in $\Hom(E,E)$ which is equal to $\bar\partial\tau^\lambda$ on~$V_\lambda$. By chasing the diagrams, it is easy to see that the form $\omega$ represents the class $a(E)$, and thus it is a $\bar\partial$-curvature. Moreover, \eqref{curvform} shows
that $\omega$ is the $(1,1)$-component of the curvature of $\na$, and we are done.
\end{proof}

\begin{corollary}\label{sameAtiyah}
Let $E$ be a holomorphic vector bundle on a complex manifold~$M$. Then
\[
a^p_{{\rm or}}(E)=a^p(E)
\]
for all $p\ge1$.
\end{corollary}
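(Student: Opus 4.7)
The plan is to invoke Theorem \ref{coincidence} as the main engine and then reduce the statement to a compatibility between the symmetric-polynomial operation on a Dolbeault class and the symmetric-polynomial operation on its representing matrix-valued form. First I would pick a trivializing open covering $\V=\{V_\lambda\}$ of $M$ and, on each $V_\lambda$, choose a holomorphic connection for $E|_{V_\lambda}$ that is trivial with respect to some holomorphic frame (as in the paragraph following the definition of $\V$-connection). This gives a holomorphic $\V$-connection $\{\Na^\lambda\}$. By Theorem \ref{coincidence}, from $\{\Na^\lambda\}$ we obtain a global $(1,0)$-connection $\na$ for $E$ whose curvature component $K^{1,1}$ is a $\bar\partial$-curvature, i.e.\ represents the class $a(E)\in H^{1,1}_{\bar\partial}(M,\Hom(E,E))$.

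Since the Atiyah class $a^p(E)\in H^{p,p}_{\bar\partial}(M)$ does not depend on the choice of $(1,0)$-connection (Subsection \ref{subA}), I can compute it with this specific $\na$, obtaining
\[
a^p(E)=\left[\left(\frac{\sqrt{-1}}{2\pi}\right)^{\!p}\sigma_p(K^{1,1})\right].
\]
It then remains to identify the right-hand side with $a^p_{\rm or}(E)=(\sqrt{-1}/2\pi)^p\sigma_p(a(E))$. This is the step I expect to be the main obstacle, because the $\sigma_p$ on the left is the elementary symmetric function evaluated on a matrix of forms in the Chern--Weil sense, while the $\sigma_p$ on the right is the same polynomial applied to the cohomology class $a(E)$ through iterated cup product composed with the appropriate invariant contraction.

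The way around this is to unwind both definitions in terms of the Dolbeault isomorphism $H^1\bigl(M,\Omega^1\otimes\Hom(\E,\E)\bigr)\simeq H^{1,1}_{\bar\partial}(M,\Hom(E,E))$, which is compatible with the natural cup-product/wedge-product pairings and with the trace operation $\Hom(E,E)^{\otimes p}\to \mathcal O$ induced by $\sigma_p$. Under this isomorphism the $p$-fold cup product $a(E)^{\cup p}$ is represented by the matrix-valued $(p,p)$-form $K^{1,1}\wedge\cdots\wedge K^{1,1}$ (wedge of $p$ copies, with matrix multiplication absorbed by the trace), and applying the invariant polynomial $\sigma_p$ gives exactly the scalar $(p,p)$-form $\sigma_p(K^{1,1})$ used in Chern--Weil theory. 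Hence $\sigma_p(a(E))$ is represented by $\sigma_p(K^{1,1})$ in $H^{p,p}_{\bar\partial}(M)$, and multiplying by $(\sqrt{-1}/2\pi)^p$ gives the desired equality $a^p_{\rm or}(E)=a^p(E)$. In practice, this compatibility step can be dispatched by working in a single local holomorphic frame on each $V_\lambda$, where both sides are matrix computations in the Dolbeault complex and the polynomial identity is manifest; the transition functions \eqref{framechange} make the resulting forms globally well defined and independent of the chosen frames.
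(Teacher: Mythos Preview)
Your proposal is correct and follows the same approach as the paper, which treats the corollary as an immediate consequence of Theorem~\ref{coincidence} without spelling out a proof. You are actually more careful than the paper in flagging and resolving the compatibility between $\sigma_p$ applied to the Dolbeault class $a(E)$ and $\sigma_p$ applied to the representing $\Hom(E,E)$-valued form $K^{1,1}$; the paper leaves this step implicit.
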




\begin{remark}\label{explicit}
Given a holomorphic $\V$-connection $\{\Na^\lambda\}$, the $\bar\partial$-curvature $\omega$
constructed in the proof of Theorem~\ref{coincidence} is not uniquely determined; it depends on the choice of the $0$-cochain
$\{\tau^\lambda\}$. One way to choose $\{\tau^\lambda\}$ is to take a partition of unity
$\{\rho_\lambda\}$ subordinate to $\V$ and set  $\tau^\lambda=\sum_\nu\rho_\nu\xi^{\nu\lambda}$.
\end{remark}

We give now a more explicit expression of the forms introduced in the proof of
Theorem~\ref{coincidence}. Let $\ell$ be the rank of $E$, and choose an open cover $\V$ of
sufficiently small open sets trivializing~$E$. On each $V_\lambda$ take a holomorphic frame
$(e_1^\lambda,\ldots, e_\ell^\lambda)$ of $E$ and let $\Na^\lambda$ be the connection on $V_\lambda$ trivial with respect to this frame. Finally, let $\{h^{\lambda\mu}\}$ be the system of
transition matrices corresponding to these choices, that is
\[
e^\mu_j=\sum_{k=1}^\ell (h^{\lambda\mu})^k_j e^\lambda_k\qquad\textrm{on }V_\lambda\cap
V_\mu.
\]
Then
\[
\xi^{\lambda\mu}(e_i^\lambda)=-\sum_{j,k=1}^\ell (h^{\lambda\mu})^j_{k}\cdot d(h^{\mu\lambda})
_i^k\otimes e_j^\lambda.
\]
Thus $\xi^{\lambda\mu}$ is represented, with respect to  the frame $(e_1^\lambda,\ldots, e_\ell^\lambda)$,  by the matrix
\[
-h^{\lambda\mu}\cdot dh^{\mu\lambda}=dh^{\lambda\mu}\cdot (h^{\lambda\mu})^{-1}=\partial h^{\lambda\mu}\cdot (h^{\lambda\mu})^{-1}
\]
as an element of $\Hom(\E,\Omega^1\otimes\E)\simeq \Omega^1\otimes\Hom(\E,\E)$ on $V_\lambda\cap V_\mu$.
Taking a partition of unity $\{\rho_\lambda\}$ subordinate to $\V$, we may set $\tau^\lambda=\sum_\nu\rho_\nu\xi^{\nu\lambda}$, as in Remark \ref{explicit}; the global $(1,0)$-connection
constructed in the proof of Theorem~\ref{coincidence} is then given by
$\na=\sum_\nu\rho_\nu\Na^\nu$, and its curvature matrix with respect to the frame $(e_1^\lambda,\dots,e_\ell^\lambda)$ is given by $\tau^\lambda$, and the
corresponding $\bar\partial$-curvature by $\bar\partial\tau^\lambda$. 

As a direct consequence of Lemma
\ref{holoAtiyah} and Corollary \ref{sameAtiyah} we get

\begin{proposition}\label{holovanishAtiyah}
Let $E$ be a holomorphic
vector bundle over a complex manifold $M$. If $E$ admits a holomorphic connection
then $a^p(E)=0$ for all $p\geq 1$, that is, all Atiyah classes of $E$ vanish.
\end{proposition}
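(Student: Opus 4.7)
The plan is to follow the authors' own signposting and deduce the statement directly from Lemma~\ref{holoAtiyah} and Corollary~\ref{sameAtiyah}. The first step is to invoke Lemma~\ref{holoAtiyah}: the hypothesis that $E$ admits a holomorphic connection is equivalent to $b(E)=0$ in $H^{1}(M,\Hom(\E,\Omega^{1}\otimes\E))$. Since $a(E)\in H^{1,1}_{\bar\partial}(M,\Hom(E,E))$ is, by construction, the image of $-b(E)$ under the Dolbeault isomorphism recalled just before Corollary~\ref{sameAtiyah}, we immediately get $a(E)=0$ as well.

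From here the argument is purely formal. Because $a(E)=0$, the zero form is a legitimate representative, and the defining relation
\[
a^{p}_{\mathrm{or}}(E)=\left(\frac{\sqrt{-1}}{2\pi}\right)^{p}\sigma_{p}(a(E))
\]
yields $a^{p}_{\mathrm{or}}(E)=0$ for every $p\ge 1$, since $\sigma_{p}(0)=0$. Applying Corollary~\ref{sameAtiyah}, which identifies $a^{p}(E)$ with $a^{p}_{\mathrm{or}}(E)$, then gives $a^{p}(E)=0$ for all $p\ge 1$, as claimed.

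There is essentially no obstacle: the proposition is a packaging corollary, and each line consists of invoking a previously established result or substituting into a definition. The one point that deserves a moment's care is the well-definedness of the symmetric-function construction at the cohomology level; but this is precisely what the definition of the original Atiyah classes requires, so nothing new is needed. As a sanity check one could verify the statement by the more hands-on route of Remark~\ref{holoCinfty}: promote the given $\Na$ to a $(1,0)$-connection $\na$, observe that its connection matrix $\theta$ in a holomorphic frame has holomorphic entries, and conclude from~(\ref{curvform}) that $\kappa^{1,1}=\bar\partial\theta=0$ and therefore $a^{p}(\na)=0$ directly.
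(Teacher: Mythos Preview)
Your argument is correct and follows exactly the paper's own approach: the proposition is stated there as a direct consequence of Lemma~\ref{holoAtiyah} and Corollary~\ref{sameAtiyah}, and you have spelled out precisely that deduction. Your closing sanity check via the holomorphic connection matrix is also the content of the paper's Remark~\ref{strongvan}.
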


\begin{remark}
\label{strongvan}
In fact, the existence of a holomorphic connection $\na$ implies the stronger vanishing $a^p(\na)=0$ for all $p\ge 1$.
This can be easily seen from (\ref{curvform}), since the connection matrix $\theta$ of $\na$ with respect to a holomorphic frame is holomorphic. See Theorem \ref{ABvanishing} below for more general vanishing results of this type.
\end{remark}

It should be remarked that the converse of Proposition \ref{holovanishAtiyah} is not true.
Namely, it might happen that $a^p(E)=0$ for all $p\geq 1$ but
$a(E)\neq 0$, as the following example shows.

\begin{example}
Let $M$ be a compact Riemann surface and  $L$  a line
bundle over $M$ such that $a^1(L)=c^1(L)\neq 0$. Let
$E :=L\oplus L^\ast$. Then $c^1(E)=c^1(L)-c^1(L)=0$, and by
Proposition \ref{compactKAtiyah} it follows $a^1(E)=0$. For
dimensional reasons, $a^p(E)=0$ for all $p\geq 2$. Now we claim
that $E$ does not admit a holomorphic connection, and hence
$a(E)\neq 0$ as a class in $H^{1,1}_{\bar\partial}\bigl(M, \Hom(E,E)\bigr)$. In fact, by
contradiction, let $\nabla$ denote a
holomorphic connection for $ E$. Let $\pi\colon \Omega^1\otimes E \to
\Omega^1\otimes L$ denote the projection and $\iota\colon L \to E$
the immersion. It is easy to show that $\pi \circ \nabla \circ
\iota$ is a holomorphic connection for $ L$. But then $c^1(L)=a^1(L)=0$,
against our assumption.
\end{example}

\lsection{\v Cech-Dolbeault cohomology}\label{CechDol}

In this section, we recall the theory of \v Cech-Dolbeault cohomology in the case of coverings consisting of two open sets. Although it is technically more involved, the ideas are similar for the general case of coverings with arbitrary number of open sets. We review relevant material for this case in Section \ref{example} and refer to \cite{Su8} for details

Let $M$ be a complex \mfd\ of dimension $n$. For an open set $U$ of $M$, we denote by $A^{p,q}(U)$ the
vector space of $C^\infty$ $(p,q)$-forms on $U$. Let $\U=\{U_0,U_1\}$ be
an open covering of $M$. We set $U_{01}=U_0\cap U_1$ and define the vector space $A^{p,q}(\U)$ as

$$
A^{p,q}(\U)=A^{p,q}(U_0)\oplus A^{p,q}(U_1)\oplus A^{p,q-1}(U_{01}).
$$
Thus an element $\sigma$ in $A^{p,q}(\U)$ is given by a triple
$\sigma=(\sigma_0, \sigma_1, \sigma_{01})$ with $\sigma_i$
a $(p,q)$-form on $U_i$, $i=0,1$, and
$\sigma_{01}$  a $(p,q-1)$-form on $U_{01}$.

We define a differential operator $\bar D\colon A^{p,q}(\U) \to A^{p,q+1}(\U)$ by
$$
\bar D\sigma=(\bar\partial\sigma_0, \bar\partial\sigma_1, \sigma_1-\sigma_0-\bar\partial\sigma_{01}).
$$
Then we have $\bar D \circ \bar D =0$ and thus a complex for each fixed $p$\,:
\[
\cdots\lra A^{p,q-1}(\U) \buildrel{\bar D^{p,q-1}}\over{\longrightarrow} A^{p,q}(\U)
\buildrel{\bar D^{p,q}}\over{\longrightarrow} A^{p,q+1}(\U) \lra\cdots.
\]

We set
$$
H_{\bar D}^{p,q}(\U)={{\rm Ker}}\, \bar D^{p,q}/{{\rm Im}}\, \bar D^{p,q-1}
$$
and call it the {\it  \v{C}ech-Dolbeault cohomology} of $\U$  of type $(p,q)$.
We denote the image of $\sigma$ by the canonical surjection
${{\rm Ker}}\, \bar D^{p,q}\to H_{\bar D}^{p,q}(\U)$ by $[\sigma]$.

Let $H_{\bar\partial}^{p,q}(M)$ denote the Dolbeault cohomology  of $M$  of type $(p,q)$.

\begin{theorem}\label{DCD}
The map $\a\colon A^{p,q}(M) \to A^{p,q}(\U)$ given by
$\omega \mapsto (\omega, \omega, 0)$
induces an isomorphism
\[
\a\colon H_{\bar\partial}^{p,q}(M) \buildrel{\sim}\over\lra H_{\bar D}^{p,q}(\U).
\]
\end{theorem}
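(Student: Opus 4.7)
The plan is to verify this via an explicit, Mayer--Vietoris--style argument using a partition of unity $\{\rho_0,\rho_1\}$ subordinate to $\U=\{U_0,U_1\}$. First I would check that $\alpha$ is well defined on cohomology: if $\omega\in A^{p,q}(M)$ is $\bar\partial$-closed, then $\bar D(\omega,\omega,0)=(\bar\partial\omega,\bar\partial\omega,\omega-\omega)=(0,0,0)$; and if $\omega=\bar\partial\eta$, then $\bar D(\eta,\eta,0)=(\omega,\omega,0)$. Both verifications are direct from the formula defining $\bar D$.

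For surjectivity, I would start with a $\bar D$-cocycle $\sigma=(\sigma_0,\sigma_1,\sigma_{01})$, so that $\bar\partial\sigma_i=0$ and $\sigma_1-\sigma_0=\bar\partial\sigma_{01}$ on $U_{01}$. Setting $\tau_0=-\rho_1\sigma_{01}$ (extended by zero from $U_{01}$ to $U_0$), $\tau_1=\rho_0\sigma_{01}$ (extended by zero from $U_{01}$ to $U_1$), and $\tau_{01}=0$, I get $\tau_1-\tau_0=\sigma_{01}$. Then $\omega$ defined by $\omega|_{U_0}=\sigma_0-\bar\partial\tau_0$ and $\omega|_{U_1}=\sigma_1-\bar\partial\tau_1$ is a globally well-defined form on $M$ exactly because $\sigma_1-\sigma_0=\bar\partial\sigma_{01}=\bar\partial(\tau_1-\tau_0)$; it is $\bar\partial$-closed since each $\sigma_i$ is, and by construction $\sigma-\alpha(\omega)=\bar D(\tau_0,\tau_1,0)$.

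For injectivity, suppose $\omega\in A^{p,q}(M)$ is $\bar\partial$-closed and $\alpha(\omega)=\bar D(\tau_0,\tau_1,\tau_{01})$, i.e.\ $\omega=\bar\partial\tau_i$ on $U_i$ and $\tau_1-\tau_0=\bar\partial\tau_{01}$ on $U_{01}$. The natural guess $\rho_0\tau_0+\rho_1\tau_1$ fails because the two local primitives do not quite agree, but I would correct it by setting $\eta|_{U_0}=\tau_0+\bar\partial(\rho_1\tau_{01})$ and $\eta|_{U_1}=\tau_1-\bar\partial(\rho_0\tau_{01})$, each of the cutoff terms being extended by zero. The cocycle identity $\tau_1-\tau_0=\bar\partial\tau_{01}$ together with $\rho_0+\rho_1=1$ forces these to agree on $U_{01}$, so $\eta\in A^{p,q-1}(M)$ is well defined and satisfies $\bar\partial\eta=\omega$ on each $U_i$, hence on $M$.

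The only subtlety worth flagging is the one that pervades every such partition--of--unity gluing: one must keep careful track of which local forms are extended by zero onto which open set, and verify that the cutoffs of products like $\rho_1\tau_{01}$ remain smooth across $\partial U_{01}$. This is automatic here because $\rho_1$ vanishes to all orders off $\mathrm{supp}\,\rho_1\subset U_1$, so the extension of $\rho_1\tau_{01}$ by zero from $U_{01}$ to $U_0$ (and similarly $\rho_0\tau_{01}$ to $U_1$) is $C^\infty$. Once these sign/support bookkeeping items are verified, both maps on cocycles descend to cohomology, and they are manifestly inverse to the induced maps on $H_{\bar D}^{p,q}(\U)$ and $H_{\bar\partial}^{p,q}(M)$ respectively, giving the isomorphism.
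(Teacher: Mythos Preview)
Your argument is correct and is essentially the same partition-of-unity approach as the paper's: unwinding your definition of $\omega$ on $U_0$ gives $\sigma_0+\bar\partial(\rho_1\sigma_{01})=\rho_0\sigma_0+\rho_1\sigma_1-\bar\partial\rho_0\wedge\sigma_{01}$, which is exactly the global form the paper writes down. You simply supply more detail---the explicit coboundary $(\tau_0,\tau_1,0)$ and a full injectivity argument---where the paper only says ``it is easy to see'' and ``not difficult to show.''
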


\begin{proof} It is not difficult to show that $\a$ is well-defined.
To prove that $\a$ is surjective, let
$\sigma=(\sigma_0, \sigma_1, \sigma_{01})$ be such that
$\bar D\sigma =0$. Let $\{\rho_0, \rho_1\}$ be a partition of
unity subordinate to the covering $\U$. Define
$\omega=\rho_0\sigma_0+\rho_1\sigma_1-\bar\partial\rho_0\wedge
\sigma_{01}$. Then it is easy to see that $\bar\partial\omega =0$ and
$[(\omega,
\omega, 0)]=[\sigma]$. The injectivity of $\a$ is also not difficult to show.
\end{proof}

We define the \emph{cup product}
\begin{equationth}\label{cup}
A^{p,q}(\U)\times A^{p',q'}(\U)\longrightarrow A^{p+p',q+q'}(\U),
\end{equationth}%
assigning to $\sigma$ in $A^{p,q}(\U)$ and $\tau$ in $A^{p',q'}(\U)$ the element
$\sigma\smallsmile\tau$ in $A^{p+p',q+q'}(\U)$ given by
$$
(\sigma\smallsmile\tau)_0=\sigma_0\wedge\tau_0,\quad(\sigma\smallsmile\tau)_1=\sigma_1\wedge\tau_1\quad\text{and}\quad
(\sigma\smallsmile\tau)_{01}=(-1)^{p+q}\sigma_0\wedge\tau_{01}+\sigma_{01}\wedge\tau_1.
$$
Then $\sigma\smallsmile\tau$ is linear in $\sigma$ and $\tau$ and we have
$$
\bar D(\sigma\smallsmile\tau)=\bar D\sigma\smallsmile\tau+(-1)^{p+q}\sigma\smallsmile\bar D\tau.
$$
Thus it induces the cup product
$$
H^{p,q}_{\bar D}(\U)\times H^{p',q'}_{\bar D}(\U)\longrightarrow
H^{p+p',q+q'}_{\bar D}(\U)
$$
compatible, via the \iso\ of Theorem \ref{DCD}, with the  product in the
Dolbeault cohomology induced from the exterior product of forms.

Now we recall the integration on the \v Cech-Dolbeault cohomology.
Let $M$ and $\U=\{U_0,U_1\}$ be as above and $\{R_0,R_1\}$ a
system of honey-comb cells adapted to $\U$ (cf. \cite{Leh2}, \cite{Su2}).  Thus each $R_i$, $i=0, 1$, is a real sub\mfd\  of dimension $2n$ with $C^\infty$ boundary in $M$ such that $R_i\subset U_i$,  $M=R_0\cup R_1$ and that ${\rm Int}\, R_0\cap {\rm Int} \, R_1=\emptyset$.
 We set $R_{01}=R_0\cap R_1$, which is equal to $\partial R_0=-\partial R_1$ as an oriented \mfd.

Suppose  $M$ is compact; then each $R_i$  is
compact and we may define the integration
\[
\int_M\colon A^{n,n}(\U)\longrightarrow \C
\]
as the sum
$$
\int_M\sigma=\int_{R_0}\sigma_0+\int_{R_1}\sigma_1+\int_{R_{01}}\sigma_{01}
$$
for $\sigma$ in $A^{n,n}(\U)$.
Then this induces the integration on the cohomology
$$
\int_M\colon H^{n,n}_{\bar D}(\U)\longrightarrow \C,
$$
which is compatible, via the \iso\ of Theorem \ref{DCD}, with the usual
integration on the Dolbeault cohomology $H^{n,n}_{\bar\partial}(M)$. Also the bilinear pairing
\[
 A^{p,q}(\U)\times A^{n-p,n-q}(\U)\longrightarrow A^{n,n}(\U)\longrightarrow \C
\]
defined as the composition of the cup product and the integration induces the Kodaira-Serre duality
\begin{equationth}\label{3.4}
KS\colon H^{p,q}_{\bar\partial}(M)\simeq H^{p,q}_{\bar D}(\U)\overset{\sim}{\lra}
H^{n-p,n-q}_{\bar D}(\U)^*\simeq H^{n-p,n-q}_{\bar\partial}(M)^*.
\end{equationth}%

Now let $S$ be a closed set in $M$. Let
$U_0=M\setminus  S$ and $U_1$ a neighborhood of $S$ in $M$, and
consider the covering $\U=\{U_0,U_1\}$ of $M$.
We denote by $A^{p,q}(\U,U_0)$ the subspace of $A^{p,q}(\U)$
consisting of elements $\sigma$ with $\sigma_0=0$,
so that we have the exact sequence
\[
O\lra A^{p,q}(\U,U_0)\lra A^{p,q}(\U)\lra A^{p,q}(U_0)\lra O.
\]

We see that $\bar D$ maps $A^{p,q}(\U,U_0)$ into $A^{p,q+1}(\U,U_0)$.
Denoting by $H^{p,q}_{\bar D}(\U,U_0)$ the $q$-th cohomology of the complex
$(A^{p,\bullet}(\U,U_0), \bar D)$, we have the long exact sequence
\begin{equationth}\label{longexact}
\cdots\lra H^{p,q-1}_{\bar\partial}(U_0)\lra H^{p,q}_{\bar D}(\U,U_0)\lra H^{p,q}_{\bar D}(\U)
\lra H^{p,q}_{\bar\partial}(U_0)\lra \cdots.
\end{equationth}

In view of the fact that $H^{p,q}_{\bar D}(\U)\simeq H^{p,q}_{\bar\partial}(M)$, we set
$$
H^{p,q}_{\bar\partial}(M,M\setminus S)=H^{p,q}_{\bar D}(\U,U_0).
$$

Suppose $S$ is
compact ($M$ may not be) and let $\{R_0,R_1\}$ be a system of honey-comb cells adapted to $\U$.
Then we may assume that $R_1$ is compact and we have the integration
on $A^{n,n}(\U,U_0)$ given by
$$
\int_M\sigma=\int_{R_1}\sigma_1+\int_{R_{01}}\sigma_{01}.
$$
This again induces the integration on the cohomology
$$
\int_M\colon H^{n,n}_{\bar D}(\U,U_0)\longrightarrow \C.
$$

The cup product (\ref{cup}) induces a pairing
$A^{p,q}(\U,U_0)\times A^{n-p,n-q}(U_1)\to A^{n,n}(\U,U_0)$, which,
followed by the integration, gives a bilinear pairing
$$
A^{p,q}(\U,U_0)\times A^{n-p,n-q}(U_1)\longrightarrow \C.
$$
This induces a \homo
\begin{equationth}\label{3.5}
\bar A\colon H^{p,q}_{\bar\partial}(M,M\setminus S)= H^{p,q}_{\bar D}(\U,U_0)\longrightarrow
H^{n-p,n-q}_{\bar\partial}(U_1)^*,
\end{equationth}
that we call the \emph{$\bar\partial$-Alexander \homo.} Note that, although $H^{p,q}_{\bar\partial}(M,M\setminus S)$ does not depend on the choice of $U_1$ because of the exact sequence (\ref{longexact}), $H^{n-p,n-q}_{\bar\partial}(U_1)^*$ does depend on the choice of $U_1$. From the above construction, we have the following

\begin{proposition}\label{commutativity}
If $M$ is compact, the following diagram is commutative\,:
\[
\begin{CD}
H^{p,q}_{\bar\partial}(M,M\setminus S)@>{j^*}>>H^{p,q}_{\bar\partial}(M)\\
  @V{\bar A}VV           @V{\wr}V{KS}V\\
H^{n-p,n-q}_{\bar\partial}(U_1)^*@>{i_*}>>H^{n-p,n-q}_{\bar\partial}(M)^*.
\end{CD}
\]
\end{proposition}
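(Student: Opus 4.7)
The plan is to verify commutativity by chasing a representative around both paths of the square and showing that the two resulting numbers agree term by term. Fix a cohomology class in the upper-left corner, represented by some $\sigma=(0,\sigma_1,\sigma_{01})\in A^{p,q}(\U,U_0)$ with $\bar D\sigma=0$, and fix a class in $H^{n-p,n-q}_{\bar\partial}(M)$ represented by a $\bar\partial$-closed $(n-p,n-q)$-form $\omega$ on $M$. To test commutativity it suffices to evaluate both composed maps $KS\circ j^*[\sigma]$ and $i_*\circ\bar A[\sigma]$ on the class $[\omega]\in H^{n-p,n-q}_{\bar\partial}(M)$.

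For the route through $j^*$: the image $j^*[\sigma]$ is represented by the same triple $\sigma$, now viewed in $A^{p,q}(\U)$. Under the isomorphism $\alpha$ of Theorem~\ref{DCD}, the class $[\omega]$ corresponds to $[(\omega,\omega,0)]\in H^{n-p,n-q}_{\bar D}(\U)$. Applying the definition of the cup product \eqref{cup} to $\sigma=(0,\sigma_1,\sigma_{01})$ and $(\omega,\omega,0)$ gives the triple
\[
\sigma\smallsmile(\omega,\omega,0)=\bigl(0,\;\sigma_1\wedge\omega,\;\sigma_{01}\wedge\omega\bigr),
\]
because the $0$-component vanishes and the $(01)$-component reduces to $\sigma_{01}\wedge\omega$ (the other summand $(-1)^{p+q}\sigma_0\wedge\omega$ is zero). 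Integrating over $M$ in the \v Cech-Dolbeault sense yields
\[
KS\circ j^*[\sigma]\,([\omega]) = \int_{R_1}\sigma_1\wedge\omega + \int_{R_{01}}\sigma_{01}\wedge\omega,
\]
since the $R_0$-contribution is zero.

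For the route through $\bar A$: by the definition just preceding \eqref{3.5}, the functional $\bar A[\sigma]$ on $H^{n-p,n-q}_{\bar\partial}(U_1)$ is evaluated on a class $[\eta]$ by the same integral formula with $\eta$ in place of $\omega$. Postcomposing with $i_*$ amounts to plugging in the restriction $\eta=\omega|_{U_1}$, and since $R_1\subset U_1$ and $R_{01}\subset U_1$, the restriction is invisible to the integrals. Hence
\[
i_*\circ\bar A[\sigma]\,([\omega]) = \int_{R_1}\sigma_1\wedge\omega|_{U_1} + \int_{R_{01}}\sigma_{01}\wedge\omega|_{U_1},
\]
which coincides term by term with the expression above. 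As this holds for arbitrary representatives $\sigma$ and $\omega$, the square commutes.

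The calculation is essentially routine; the only subtlety worth stressing is getting the cup product formula right for the mixed triple $(0,\sigma_1,\sigma_{01})\smallsmile(\omega,\omega,0)$, because the asymmetry of the $(01)$-component in \eqref{cup} could in principle produce an extra term. The vanishing $\sigma_0=0$ kills that potential term, and it is precisely this cancellation that makes the relative cup product compatible with the absolute one and hence makes the Alexander homomorphism match Kodaira-Serre duality. No other nontrivial step is needed.
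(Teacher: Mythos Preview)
Your proof is correct and follows exactly the approach the paper has in mind: the paper states the proposition as an immediate consequence of the constructions (``From the above construction, we have the following''), and what you have written is precisely the routine verification that unpacks those constructions. Your computation of the cup product $(0,\sigma_1,\sigma_{01})\smallsmile(\omega,\omega,0)$ and the observation that only the $U_1$-component of the test form enters the relative pairing are exactly the points that make the diagram commute.
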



\lsection{Localization of Atiyah classes}

In this section we describe a general scheme for dealing with localization problems.

\subsection{Atiyah classes in the \v Cech-Dolbeault cohomology}\label{AtiyahinCD}

Let $M$ be a complex manifold and $\U=\{U_0,U_1\}$ an open covering of $M$ consisting of two open sets, so that
\[
A^{p,p}(\U)=A^{p,p}(U_0)\oplus A^{p,p}(U_1)\oplus A^{p,p-1}(U_{01}).
\]
For $i=0,1$, let $\na_i$ be a $(1,0)$-connection for $E$ on $U_i$. Then the cochain
\[
a^p(\na_*)=\bigl(a^p(\na_0),a^p(\na_1),a^p(\na_0,\na_1)\bigr)
\]
is in fact a cocycle, because of (\ref{difference}), and thus defines a class $[a^p(\na_*)]$ in $H^{p,p}_{\bar D}(\U)$.

As in the case of Chern classes, it is not difficult to show that the class $[a^p(\na_*)]$ does not depend on the choice of the connections $\na_i$ and corresponds to the Atiyah class $a^p(E)$ via the \iso\ of Theorem \ref{DCD} (cf. \cite[Ch.II, 8. D]{Su2}).

Similarly, if $\varphi$ is a symmetric homogeneous polynomial of degree
$d$, the cocycle
\begin{equationth}\label{CDcocycle}
\varphi^A(\na_*)=\bigl(\varphi^A(\na_0),\varphi^A(\na_1),\varphi^A(\na_0,\na_1)\bigr)
\end{equationth}
defines a class in  $H^{d,d}_{\bar D}(\U)$, which  corresponds to the  class $\varphi^A(E)$ via the \iso\ of Theorem \ref{DCD}.

\subsection{Localization principle}

Let $M$ be a complex manifold of dimension $n$
and $E$ a holomorphic vector bundle of  rank $\ell$ over
$M$.  Also, let $S$ be a closed set in $M$ and $U_1$ a
neighborhood of $S$. Setting $U_0=M\setminus S$, we consider the
covering ${\cal U}=\{U_0,U_1\}$  of $M$. Recall that for a homogeneous
symmetric polynomial $\varphi$ of degree $d$, the characteristic
class $\varphi^A(E)$ in $H^{d,d}_{\bar D}(\U)\simeq H^{d,d}_{\bar\partial}(M)$ is represented by the cocycle $\varphi^A(\na_*)$
in $A^{d,d}({\cal U})$ given by (\ref{CDcocycle}).

It often happens (see, {\it e.g.}, Remark~\ref{strongvan},  Theorems~\ref{vanishing}
and~\ref{ABvanishing} below, or \cite{ABT0, ABT}) that
the existence of a geometric object $\gamma$ on $U_0$ implies the vanishing
of $\varphi(E|_{U_0})$ or of $\varphi^A(E|_{U_0})$, or even of the forms representing them,  for
some symmetric homogeneous polynomial~$\varphi$.
In this section we shall show that in this case we can localize the class $\varphi^A(E)$ at~$S$.

To formalize this idea, assume that given a symmetric homogeneous polynomial $\varphi$ we can associate to $\gamma$ a class ${\cal C}$ of $(1,0)$-connections for $ E|_{U_0}$ such that
$$
\varphi^A(\na)\equiv O
$$
for all $\nabla\in{\cal C}$. We shall also assume (see, {\it e.g.}, Theorem~\ref{ABvanishing})
that
$$
\varphi^A(\na_0,\na_1)\equiv O
$$
for all pairs $\nabla_0$,~$\nabla_1\in\cal C$.
In this case we shall say that $\varphi$ is \emph{adapted} to~$\gamma$,
and we shall call any connection in~$\cal C$ \emph{special.}


Assume that $\na_0$ is special and $\varphi$ is adapted to
$\gamma$. The cocycle $\varphi^A(\na_*)$ is then in
$A^{d,d}({\cal U},U_0)$ and thus it defines a class in
$H^{d,d}_{\bar\partial}(M,M\setminus S)$, which is denoted by
$\varphi^A_S(E,\gamma)$.
It is sent to the  class $\varphi^A(E)$ by the canonical
homomorphism $j^*\colon H^{d,d}_{\bar\partial}(M,M\setminus S)\to H^{d,d}_{\bar\partial}(M)$.
It is not difficult to see that the class $\varphi^A_S(E,\gamma)$
does not depend on the choice of the special connection $\na_0$ or of
the connection~$\na_1$ (cf. \cite[Ch.III, Lemma 3.1]{Su2}). We call $\varphi_S^A(E,\gamma)$ the \emph{localization of
$\varphi^A(E)$ at $S$ by $\gamma$}.

Suppose now $S$ is  compact. Then we have the $\bar\partial$-Alexander
\homo\ (\ref{3.5})
$$
\bar A\colon H^{d,d}_{\bar\partial}(M,M\setminus S)\longrightarrow
H_{\bar\partial}^{n-d,n-d}(U_1)^*.
$$
Thus the class $\varphi^A_S(E,\gamma)$ defines a class in
$H_{\bar\partial}^{n-d,n-d}(U_1)^*$, which we call the \emph{residue} of $\gamma$ for
the class $\varphi^A(E)$ on $U_1$, and denote by ${\rm
Res}_{\varphi^A}(\gamma,E;U_1)$.


Suppose moreover that $S$ has a finite number of connected
components $\{S_\lambda\}_\lambda$. For each $\lambda$, we choose a \nbd\ $U_\lambda$ of $S_\lambda$ so that $U_\lambda\cap U_\mu=\emptyset$ if $\lambda\ne\mu$. Then we have
the residue ${\rm Res}_{\varphi^A}(\gamma,E;U_\lambda)$ in
$H_{\bar\partial}^{n-d,n-d}(U_\lambda)^*$ for each $\lambda$.
Let $R_\lambda$ be a $2n$-dimensional manifold with
$C^{\infty}$ boundary in $U_\lambda$ containing  $S_\lambda$ in
its interior and set $R_{0\lambda}=-\partial R_\lambda$.
Then the residue ${\rm Res}_{\varphi^A}(\gamma,E;U_\lambda)$ is
represented by a functional
\begin{equationth}\label{reshom}
\eta\mapsto\int_{R_\lambda}\varphi^A(\na_1)\wedge\eta
+\int_{R_{0\lambda}}\varphi^A(\na_0,\na_1)\wedge\eta
\end{equationth}%
for every $\bar\partial$-closed $(n-d,n-d)$-form $\eta$ on $U_\lambda$.


From the above considerations and Proposition \ref{commutativity}, we have
the following \emph{residue theorem}\,:

\begin{theorem}\label{residueth}
Let $E$ be a holomorphic vector bundle on a complex manifold $M$ of dimension~$n$. Let $S$ be a compact
subset of~$M$ with a finite number of connected components~$\{S_\lambda\}_\lambda$.
Assume we have a geometric object $\gamma$ on $U_0=M\setminus S$ and a symmetric
homogeneous polynomial~$\varphi$ of degree~$d$, adapted to $\gamma$. For each~$\lambda$ choose a
neighbourhood~$U_\lambda$ of~$S_\lambda$ so that $U_\lambda\cap U_\mu=\emptyset$
when $\lambda\ne\mu$. Then\,:
\begin{enumerate}
\item[{\rm(1)}] For each connected component $S_\lambda$ the residue ${\rm Res}_{\varphi^A}(\gamma,E;U_\lambda)$
in  the dual space $H_{\bar\partial}^{n-d,n-d}(U_\lambda)^*$ is
represented by the functional $(\ref{reshom})$;
\item[{\rm(2)}] if moreover $M$ is compact, then
$$
\sum_\lambda (i_\lambda)_*{\rm
Res}_{\varphi^A}(\gamma,E;U_\lambda)=KS(\varphi^A(E))
\qquad\text{in}\ \ H_{\bar\partial}^{n-d,n-d}(M)^*,
$$
where $i_\lambda\colon U_\lambda \hookrightarrow M$ denotes the
inclusion.
\end{enumerate}
\end{theorem}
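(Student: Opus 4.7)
My plan is to express the residues at the individual components through the explicit cocycle representing the localized class $\varphi^A_S(E,\gamma)$ in \v Cech-Dolbeault cohomology, and then to read the global statement (2) off Proposition~\ref{commutativity}.

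For part (1) I would take the covering $\U=\{U_0,U_1\}$ of $M$ with $U_0=M\setminus S$ and $U_1=\bigsqcup_\lambda U_\lambda$ (a disjoint union, thanks to the hypothesis $U_\lambda\cap U_\mu=\emptyset$). Choose a honey-comb system $\{R_0,R_1\}$ adapted to $\U$ with $R_1=\bigsqcup_\lambda R_\lambda$, each $R_\lambda\subset U_\lambda$ containing $S_\lambda$ in its interior; then automatically $R_{01}=-\partial R_1=\bigsqcup_\lambda R_{0\lambda}$. By Section~\ref{AtiyahinCD}, the localized class $\varphi^A_S(E,\gamma)$ is represented by the cocycle
\[
\bigl(0,\,\varphi^A(\na_1),\,\varphi^A(\na_0,\na_1)\bigr)\in A^{d,d}(\U,U_0),
\]
with $\na_0$ a special connection on $U_0$ (so $\varphi^A(\na_0)\equiv O$) and $\na_1$ any $(1,0)$-connection on $U_1$. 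Given a $\bar\partial$-closed $\eta\in A^{n-d,n-d}(U_\lambda)$, I would extend it by zero to all of $U_1$ and apply the cup product and integration formulas of Section~\ref{CechDol}. Because $\sigma_0=0$, every term in the cup product involving $\sigma_0$ drops out, and only the $\lambda$-th components of $R_1$ and $R_{01}$ contribute to the integral, yielding
\[
\int_{R_\lambda}\varphi^A(\na_1)\wedge\eta+\int_{R_{0\lambda}}\varphi^A(\na_0,\na_1)\wedge\eta,
\]
which is precisely the functional $(\ref{reshom})$. Under the splitting $H^{n-d,n-d}_{\bar\partial}(U_1)^*\simeq\bigoplus_\lambda H^{n-d,n-d}_{\bar\partial}(U_\lambda)^*$ induced by the disjoint union, this identifies the $\lambda$-th component of $\bar A(\varphi^A_S(E,\gamma))$ with $(\ref{reshom})$, proving (1).

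For part (2), assume $M$ compact and apply Proposition~\ref{commutativity} to the class $\varphi^A_S(E,\gamma)\in H^{d,d}_{\bar\partial}(M,M\setminus S)$. By construction $j^*\varphi^A_S(E,\gamma)=\varphi^A(E)$, so the commutativity of that diagram yields
\[
i_*\,\bar A\bigl(\varphi^A_S(E,\gamma)\bigr)=KS\bigl(\varphi^A(E)\bigr).
\]
The inclusion $i\colon U_1\hookrightarrow M$ decomposes as $\bigsqcup_\lambda i_\lambda$, so on the direct-sum decomposition $i_*=\sum_\lambda (i_\lambda)_*$, and by part~(1) the left-hand side becomes $\sum_\lambda(i_\lambda)_*{\rm Res}_{\varphi^A}(\gamma,E;U_\lambda)$, which is the claimed identity.

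The one place where I expect to need care is the explicit verification in (1) that the Alexander pairing really reduces to $(\ref{reshom})$: although the algebra is elementary, one must handle correctly the signs appearing in the cup product defined in $(\ref{cup})$ and the orientation convention $R_{0\lambda}=-\partial R_\lambda$. Everything else is essentially a diagram chase through the \v Cech-Dolbeault machinery of Section~\ref{CechDol} and the construction of $\varphi^A_S(E,\gamma)$ given in Section~\ref{AtiyahinCD}.
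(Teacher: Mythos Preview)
Your proposal is correct and follows exactly the route the paper takes: the theorem is stated there as an immediate consequence of the preceding construction of $\varphi^A_S(E,\gamma)$, the explicit integration formula~(\ref{reshom}) for the $\bar\partial$-Alexander homomorphism, and Proposition~\ref{commutativity}. Your write-up simply makes explicit the disjoint-union bookkeeping for the components $S_\lambda$ that the paper leaves implicit.
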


\begin{remark}\label{resnum}
If $d=n$ and $M$ is compact and connected, then $H_{\bar\partial}^{n-d,n-d}(M)^*=H_{\bar\partial}^{0,0}(M)^*$ may be identified with $\C$, and in this case, $ (i_\lambda)_*{\rm
Res}_{\varphi^A}(\gamma,E;U_\lambda)$ is a complex number given by
\[
\int_{R_\lambda}\varphi^A(\na_1)
+\int_{R_{0\lambda}}\varphi^A(\na_0,\na_1),
\]
and $KS(\varphi^A(E))$ may be expressed as $\int_M \varphi^A(E)$.

Furthermore, in this case $H_{\bar\partial}^{0,0}(M)^*=H_0(M,\C)$,
and $\varphi^A$ may be replaced by $\varphi$ (cf. Remark \ref{top}) so that the Atiyah residue equals the Chern residue.
\end{remark}

We finish this section by studying what happens in the case of compact K\"ahler manifolds.
Thus let $M$ be a compact K\"ahler manifold of dimension $n$, and
$E$ a holomorphic vector bundle on $M$.  We have the
following commuting diagram\,:
\[
\begin{CD}
H^{p,p}_{\bar\partial}(M)@>{H}>>H^{2p}_{d}(M)\\
  @V{KS}V{\wr}V           @V{\wr}V{P}V\\
H^{n-p,n-p}_{\bar\partial}(M)^*@>{H_*}>>H_{2n-2p}(M,\C).
\end{CD}
\]
where $H$ denotes the injection given by the Hodge decomposition, $H_*$  the injection given by the dual decomposition, and
$P$ the Poincar\'e isomorphism, which is given by the cap product with the fundamental cycle $[M]$.

Since
$H(\varphi^A(E))=\varphi(E)$ in this case (Proposition \ref{compactKAtiyah}), applying $H_*$ to the both sides of the formula in Theorem \ref{residueth}.(2), we actually have a localization
result for Chern classes\,:

\begin{theorem}\label{loc-K}
Let $E$ be a holomorphic vector bundle on a compact K\"ahler manifold $M$ of dimension~$n$. Let $S$ be a compact
subset of~$M$ with a finite number of connected components~$\{S_\lambda\}_\lambda$.
Assume we have a geometric object $\gamma$ on $U_0=M\setminus S$ and a symmetric
homogeneous polynomial~$\varphi$ of degree~$d$, adapted to $\gamma$. For each~$\lambda$ choose a
neighbourhood~$U_\lambda$ of~$S_\lambda$ so that $U_\lambda\cap U_\mu=\emptyset$
when $\lambda\ne\mu$. Then
$$
\sum_\lambda H_*\bigl( (i_\lambda)_*{\rm Res}_{\varphi^A}(\gamma,E;U_\lambda)\bigr)=\varphi(E)\frown [M]
\qquad\text{in}\ \ H_{2n-2d}(M,\C).
$$
\end{theorem}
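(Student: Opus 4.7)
The plan is to reduce Theorem \ref{loc-K} to the general residue theorem, Theorem \ref{residueth}(2), by transporting the identity it produces in $H^{n-d,n-d}_{\bar\partial}(M)^*$ into singular homology via the dual Hodge injection. Under the hypotheses, Theorem \ref{residueth}(2) supplies
\[
\sum_\lambda (i_\lambda)_*{\rm Res}_{\varphi^A}(\gamma,E;U_\lambda) = KS(\varphi^A(E))
\quad\text{in}\quad H_{\bar\partial}^{n-d,n-d}(M)^*.
\]

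Applying $H_*$ to both sides, the commutative square displayed immediately before the theorem gives $H_* \circ KS = P \circ H$ on $H^{d,d}_{\bar\partial}(M)$. Hence
\[
\sum_\lambda H_*\bigl((i_\lambda)_*{\rm Res}_{\varphi^A}(\gamma,E;U_\lambda)\bigr) = P\bigl(H(\varphi^A(E))\bigr).
\]

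To conclude, I would identify $H(\varphi^A(E))$ with $\varphi(E)$. Proposition \ref{compactKAtiyah} asserts this in the elementary symmetric case, $H(a^p(E)) = c^p(E)$. For a general symmetric homogeneous polynomial $\varphi$ of degree $d$, write $\varphi = Q(\sigma_1,\sigma_2,\ldots)$; then by definition $\varphi^A(E) = Q(a^1(E),a^2(E),\ldots)$, while $\varphi(E) = Q(c^1(E),c^2(E),\ldots)$. Since the Hodge injection $H$ is multiplicative with respect to the cup product on a compact Kähler manifold (the Hodge decomposition being compatible with the wedge product of representative forms), we obtain $H(\varphi^A(E)) = Q(c^1(E),c^2(E),\ldots) = \varphi(E)$. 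By definition of the Poincaré isomorphism $P$, this yields $P(H(\varphi^A(E))) = \varphi(E) \frown [M]$, which is the desired equality in $H_{2n-2d}(M,\mathbb{C})$.

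The only nontrivial point is the extension of Proposition \ref{compactKAtiyah} from the elementary symmetric polynomials to arbitrary $\varphi$, that is, the compatibility of $H$ with the polynomial expression $\varphi = Q(\sigma_1,\sigma_2,\ldots)$; this is where the Kähler hypothesis plays its role beyond the single case $\varphi = \sigma_p$. The remainder is a formal diagram chase applied to the conclusion of Theorem \ref{residueth}(2).
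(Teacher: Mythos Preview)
Your proof is correct and follows the same route as the paper: apply $H_*$ to both sides of Theorem~\ref{residueth}(2), use the commutative square $H_*\circ KS = P\circ H$, and identify $H(\varphi^A(E))$ with $\varphi(E)$. The paper cites Proposition~\ref{compactKAtiyah} for this last identification without further comment; your observation that the proposition is stated only for the elementary symmetric polynomials, together with the multiplicativity argument extending it to arbitrary $\varphi$, is a legitimate elaboration of a point the paper leaves implicit (an alternative is to note that the proof of Proposition~\ref{compactKAtiyah} via the metric connection works verbatim for any~$\varphi$, since the curvature of $\nabla^h$ is purely of type $(1,1)$).
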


Notice that $H_* \bigl((i_\lambda)_*{\rm Res}_{\varphi^A}(\gamma,E;U_\lambda)\bigr)$ is represented by a cycle $C$ such that
for each closed $(2n-2d)$-form $\omega$, the integral $\int_C\o$ is given by the right-hand side of (\ref{reshom}) with $\eta$ a $\bar\partial$-closed
$(n-d,n-d)$-form representing the $(n-d,n-d)$-component of the class $[\o]\in H^{2d-2n}_d(M)$.

\lsection{Localization by  frames}\label{locfr}

In this section we give a first example of localization of Atiyah classes following the
scheme indicated in the previous section.

The starting point is the following vanishing theorem, which is a consequence of
the corresponding vanishing theorem for Chern forms (cf., {\it e.g.}, \cite[Ch.II, Proposition 9.1]{Su2}).

\begin{theorem}\label{vanishing}
Let $E$ be a holomorphic vector bundle of rank~$\ell$ on a complex manifold~$M$.
Let $s^{(r)}=(s_1,\dots,s_r)$ be an $r$-frame of $E$ on an open set $U\subset M$,
and $\na$ an $s^{(r)}$-trivial  $(1,0)$-connections for $E$ on $U$. Then
$$
a^p(\na)=O,\qquad\text{on}\ \ U\ \text{ for }\ p\ge \ell-r+1.
$$
\end{theorem}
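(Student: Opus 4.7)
The plan is to deduce the Atiyah form vanishing from the corresponding (stronger) vanishing of the full Chern form $c^p(\nabla)$, as the hint preceding the theorem suggests. Since the remark after the definition of Atiyah forms states that $a^p(\nabla)$ is precisely the $(p,p)$-component of $c^p(\nabla)$, it is enough to establish
\[
c^p(\nabla)=O\qquad\text{on }U\ \text{for}\ p\ge\ell-r+1,
\]
and then project onto the $(p,p)$-part.

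First, I would extend the holomorphic $r$-frame $s^{(r)}=(s_1,\dots,s_r)$ to a $C^\infty$ frame $(s_1,\dots,s_\ell)$ of $E$ on $U$, possibly after shrinking $U$; since linear independence is an open condition, such an extension always exists locally, and it is sufficient for the computation of $c^p(\nabla)$ because the Chern forms are defined intrinsically and any $C^\infty$ frame can be used to compute them via formula \eqref{framechange}. With respect to $(s_1,\dots,s_\ell)$ the connection matrix $\theta=(\theta^j_i)$ of the $s^{(r)}$-trivial connection $\nabla$ satisfies $\theta^j_i=0$ for every $i\le r$, i.e.\ its first $r$ columns are identically zero.

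Next, I would examine the curvature matrix $\kappa=d\theta+\theta\wedge\theta$. The first $r$ columns of $d\theta$ vanish because those of $\theta$ do, and the $(j,i)$-entry of $\theta\wedge\theta$ for $i\le r$ is $\sum_k\theta^j_k\wedge\theta^k_i=0$ since $\theta^k_i=0$. Hence the first $r$ columns of $\kappa$ are zero, so in block form $\kappa=\begin{pmatrix}O&*\\ O&B\end{pmatrix}$ where $B$ is a square matrix of size $\ell-r$ whose entries are $2$-forms. Consequently
\[
\det(I+t\kappa)=\det(I_{\ell-r}+tB),
\]
which is a polynomial in $t$ of degree at most $\ell-r$. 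Extracting the coefficient of $t^p$ yields $\sigma_p(\kappa)=0$ for every $p\ge\ell-r+1$, hence $c^p(\nabla)=O$ and therefore $a^p(\nabla)=O$ for such $p$.

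There is no serious obstacle here; the only non-formal point is the extension of $s^{(r)}$ to a $C^\infty$ frame, which is immediate by local triviality of $E$. Everything else is a direct block-matrix computation, identical in spirit to the classical argument for Chern forms in \cite[Ch.II, Proposition 9.1]{Su2}, with the observation that since $c^p(\nabla)$ vanishes as a form, so do all of its bidegree components, and in particular the $(p,p)$-component $a^p(\nabla)$.
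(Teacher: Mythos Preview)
Your argument is correct and follows exactly the route the paper indicates: the paper does not give an independent proof but simply observes that the statement is a consequence of the corresponding vanishing theorem for Chern forms (\cite[Ch.~II, Proposition~9.1]{Su2}), and you have reconstructed that classical argument in full. The block-triangular curvature computation is the standard one, and the final step---passing from $c^p(\nabla)=O$ to $a^p(\nabla)=O$ by taking the $(p,p)$-component---is precisely what the paper intends.
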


Let $S$ be a closed set in $M$ and assume we have an $r$-frame $s^{(r)}$ of $E$ on $M\setminus S$. We let $U_0=M\setminus S$, choose a neighborhood $U_1$  of $S$, and consider the covering $\U=\{U_0, U_1\}$ of $M$.  Let $\na_0$ be an $s^{(r)}$-trivial $(1,0)$-connection for $E$ on $U_0$, and $\na_1$ an arbitrary
$(1,0)$-connection for $E$ on $U_1$. The $p$-th Atiyah class $a^p(E)$ is represented by the \v Cech-Dolbeault cocycle
\[
a^p(\na_*)=\bigl(a^p(\na_0),a^p(\na_1),a^p(\na_0,\na_1)\bigr).
\]
By Theorem \ref{vanishing}, if $p\ge \ell-r+1$, we have $a^p(\na_0)=0$; thus $a^p(\na_*)\in A^{p,p}(\U,U_0)$ determines a class  in $H^{p,p}_{\bar\partial}(M,M\setminus S)$, which we denote by $a^p(E,s^{(r)})$ and  call  the \emph{localization of $a^p(E)$ by $s^{(r)}$.}

\begin{remark} If we have several $s^{(r)}$-trivial $(1,0)$-connections, we also have the vanishing of their difference form, and so $s^{(r)}$-trivial $(1,0)$-connections are special in the sense
discussed in the previous section. As a consequence,  the localization $a^p(E,s^{(r)})$ does not depend on the choice of the $s^{(r)}$-trivial $(1,0)$-connection $\na_0$ (or of the $(1,0)$-connection $\na_1$); cf. \cite{Su2}.
\end{remark}

\begin{example} Let $C$ be a compact Riemann surface and $L$ a holomorphic line bundle over $C$. Suppose we have a meromorphic section $s$ of $L$ and let $S$ be the set of zeros and poles of $s$. The previous construction gives us the localization $a^1(L,s)$ in $H^{1,1}_{\bar\partial}(C,C\setminus S)$ of $a^1(L)$ in $H^{1,1}_{\bar\partial}(C)$. Note that $S$ consists of a finite number of points. Let $p$ be a point in $S$ and choose an open \nbd\ $U$ of $p$ not containing any other point in $S$ and trivializing~$L$. Let $e$ be a holomorphic frame of~$L$
on $U$, and write $s=fe$ with $f$ a meromorphic function on $U$. Let $\na_0$ be the $s$-trivial connection for $L$ on $C\setminus S$ and $\na_1$ the $e$-trivial connection for $L$ on $U$. If we denote by $i$ the embedding $U\hookrightarrow C$, we have (by Theorem~\ref{residueth} and Remark~\ref{resnum})
\[
i_*{\rm Res}_{a^1}(L,s;U)=\int_R a^1(\na_1)-\int_{\partial R}a^1(\na_0,\na_1).
\]
But we also have $a^1(\na_1)=0$, and a computation gives
\[
a^1(\na_0,\na_1)=\frac{\sqrt{-1}}{2\pi}\frac{df}f.
\]
So
\[
i_*{\rm Res}_{a^1}(L,s;U)=\frac{1}{2\pi\sqrt{-1}}\textrm{Res}_p\left(\frac{df}{f}\right),
\]
and Theorem~\ref{loc-K} yields
\[
\sum_{p\in S} \frac1{2\pi\sqrt{-1}}\textrm{Res}_p\left(\frac{df}f\right)=\int_C a^1(L).
\]
In particular we have recovered the classical residue formula for the Chern class, as $\int_C c^1(L)=\int_C a^1(L)$ in this case.
\end{example}

See \cite{Su8} for another fundamental example of localized classes of this type, {\it i.e.}, the ``$\bar\partial$-Thom class" of a holomorphic \vb.

\lsection{A Bott type vanishing theorem}\label{ABvanish}

Let $M$ be a complex  manifold  and $E$ a complex vector bundle over $M$.
If $H$ is a subbundle of the complexified tangent bundle $T^c_\R M$, then its dual  $H^*$ is
canonically viewed as a quotient
of $(T^c_\R M)^*$. We denote by $\rho$ the canonical projection $(T^c_\R M)^*\to H^*$.
Following \cite{BB2}, we give the following definition.

\begin{definition}\label{partialconnection}
A {\it partial connection} for $ E$ is a pair $(H,\delta)$ given by a subbundle $H$
of $T^c_\R M$ and a $\C$-linear map
\[
\delta\colon A^0(M,E)\lra A^0(M,H^*\otimes E)
\]
satisfying
$$
\delta(fs)=\rho(df)\otimes s+f\delta(s)\qquad \text{for}\ f\in A^0(M)\ \text{and}\ s\in A^0(M,E).
$$
\end{definition}

As in the case of connections, it is easy to show that a partial connection is a local operator and
thus it admits locally a representation by a matrix whose entries are $C^{\infty}$ sections of $H^*$.

\begin{definition}\label{conext} Let $(H,\delta)$ be a partial connection for $ E$. We say that a connection
$\na$ for $ E$ \emph{extends} $(H,\delta)$ if the diagram
\[
\begin{CD}
A^0(M,E) @>\na>> A^1(M,E)=A^0(M,(T^c_\R M)^*\otimes E) \\
@VidVV                         @V\rho\otimes1VV \\
A^0(M,E) @>\delta>> A^0(M,H^*\otimes E)
\end{CD}
\]
is commutative.
\end{definition}

It is easy to see that the following lemma holds (\cite[Lemma (2.5)]{BB2}).

\begin{lemma} Any partial connection for a complex vector bundle admits an extension.\end{lemma}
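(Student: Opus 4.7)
The plan is to reduce the problem to the existence of a smooth splitting of the projection $\rho\colon (T^c_\R M)^*\to H^*$ and then proceed by a difference-from-a-reference-connection argument.

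First, I would observe that the quotient map $\rho\colon(T^c_\R M)^*\to H^*$ is a surjective morphism of $C^\infty$ complex vector bundles on $M$. Since we are in the smooth category and $M$ admits partitions of unity, this short exact sequence splits; equivalently, one can pick a Hermitian metric on $(T^c_\R M)^*$ and take $\sigma\colon H^*\to (T^c_\R M)^*$ to be the orthogonal injection onto $H^*$. In any case, fix once and for all a $C^\infty$ bundle map $\sigma$ with $\rho\circ\sigma=\mathrm{id}_{H^*}$; it induces $\sigma\otimes 1\colon H^*\otimes E\to (T^c_\R M)^*\otimes E$ and analogously $\sigma\otimes 1\colon H^*\otimes\Hom(E,E)\to (T^c_\R M)^*\otimes\Hom(E,E)$, both right inverse to $\rho\otimes 1$.

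Next, pick any connection $\na_0$ for $E$ (which exists, e.g.\ patching trivial local connections with a partition of unity as in the holomorphic case). The composition $\delta_0:=(\rho\otimes 1)\circ\na_0$ is easily seen to be a partial connection for $E$ along $H$. The difference $\delta-\delta_0\colon A^0(M,E)\to A^0(M,H^*\otimes E)$ is then $A^0(M)$-linear, since the two Leibniz correction terms cancel; hence it is given by a $C^\infty$ section $\xi$ of $H^*\otimes\Hom(E,E)$. Set $\tau:=(\sigma\otimes 1)(\xi)\in A^1(M,\Hom(E,E))$ and define
\[
\na:=\na_0+\tau,
\]
where $\tau$ acts on sections of $E$ by the obvious evaluation. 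Because $\tau$ is $A^0(M)$-linear, $\na$ is still a connection for $E$.

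Finally, I would check that $\na$ extends $(H,\delta)$. Applying $\rho\otimes 1$ to $\na s=\na_0 s+\tau\cdot s$ gives $\delta_0(s)+(\rho\otimes 1)(\sigma\otimes 1)(\xi\cdot s)=\delta_0(s)+\xi\cdot s=\delta_0(s)+(\delta-\delta_0)(s)=\delta(s)$, so the square in Definition~\ref{conext} commutes. The only genuinely non-formal ingredient is the existence of the smooth splitting $\sigma$, and this is the step I would expect to highlight; everything else is a routine $A^0(M)$-linearity/Leibniz computation. (As an alternative, one could avoid choosing $\sigma$ globally by constructing $\na$ locally on a trivializing cover $\{U_\lambda\}$—lifting each matrix entry of $\delta$ arbitrarily to a $1$-form—and then gluing via a partition of unity, using that a convex combination of connections extending $\delta$ still extends $\delta$.)
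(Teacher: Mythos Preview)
Your argument is correct. The paper itself does not supply a proof of this lemma; it simply records it as easy and cites \cite[Lemma (2.5)]{BB2}. Your approach---choosing a smooth splitting $\sigma$ of $\rho\colon (T^c_\R M)^*\to H^*$, fixing an arbitrary connection $\na_0$, and correcting it by the lifted difference $(\sigma\otimes 1)(\delta-(\rho\otimes 1)\na_0)$---is exactly the standard argument underlying the cited result, and each step (the $A^0(M)$-linearity of the difference, the verification that $(\rho\otimes 1)\circ\na=\delta$) is carried out cleanly. The local alternative you sketch at the end (lift matrix entries of $\delta$ arbitrarily on a trivializing cover and average with a partition of unity) is the version usually attributed to Baum--Bott; your global formulation is a minor repackaging of the same idea.
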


\begin{example} If $E$ is  holomorphic, then we have the differential operator
\[
\bar\partial\colon A^0(M,E)\lra  A^0(M,\overline T^*M\otimes E).
\]
The pair $(\overline TM,\bar\partial)$ is a partial connection for $E$.
\end{example}

The following is not difficult to prove\,:

\begin{lemma}[\cite{BB2}]
A connection $\na$ for a holomorphic \vb\ $E$ is of type $(1,0)$ if and only if it  extends $(\overline TM,\bar\partial)$.
\end{lemma}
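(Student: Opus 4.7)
The plan is to verify the equivalence locally, using a holomorphic frame, and then invoke the fact (already noted in the excerpt) that the type of a connection matrix is independent of the choice of holomorphic frame. Since both conditions (being of type $(1,0)$, and extending $(\overline T M,\bar\partial)$) are pointwise/local in nature, it suffices to work on an open set $U\subset M$ on which $E$ admits a holomorphic frame $e^{(\ell)}=(e_1,\dots,e_\ell)$.

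First I would unwind the condition that $\na$ extends $(\overline T M,\bar\partial)$. The projection $\rho\colon(T^c_\R M)^*\to\overline T^*M$ is exactly the projection of a smooth $1$-form to its $(0,1)$-component, so $(\rho\otimes 1)\circ\na$ sends a section of $E$ to the $(0,1)$-part of its covariant derivative. Thus the extension condition reads
\[
(\na s)^{0,1}=\bar\partial s\qquad\text{for every }s\in A^0(U,E).
\]
Writing $s=\sum_i f^i e_i$ with $f^i\in A^0(U)$ and using the Leibniz rule together with $\bar\partial e_i=0$ (holomorphy of the frame), I compute
\[
\na s=\sum_{i}df^i\otimes e_i+\sum_{i,j}f^i\theta^j_i\otimes e_j
\qquad\text{and}\qquad
\bar\partial s=\sum_i\bar\partial f^i\otimes e_i,
\]
where $\theta=(\theta^j_i)$ is the connection matrix of $\na$ with respect to $e^{(\ell)}$. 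Taking the $(0,1)$-component of the first equation gives
\[
(\na s)^{0,1}=\sum_i \bar\partial f^i\otimes e_i+\sum_{i,j}f^i(\theta^j_i)^{0,1}\otimes e_j.
\]

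Comparing with $\bar\partial s$, the extension condition becomes $\sum_{i,j}f^i(\theta^j_i)^{0,1}\otimes e_j=0$ for every choice of functions $f^i$, which is equivalent to $(\theta^j_i)^{0,1}=0$ for all $i,j$, i.e., to $\theta$ being a matrix of $(1,0)$-forms. This is precisely the condition that $\na$ be a $(1,0)$-connection with respect to the holomorphic frame $e^{(\ell)}$, and by Remark (1) after the definition of $(1,0)$-connection this property is independent of the chosen holomorphic frame. There is no real obstacle here: the only thing to be careful about is to use $\bar\partial e_i=0$, which encodes the holomorphy of both $E$ and the frame, when identifying $(\na s)^{0,1}$ with $\bar\partial s$.
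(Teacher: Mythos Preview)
Your proof is correct and is precisely the standard elementary verification. The paper itself does not supply a proof of this lemma: it merely states that ``the following is not difficult to prove'' and attributes the result to \cite{BB2}. Your local computation with a holomorphic frame---reducing the extension condition $(\rho\otimes 1)\circ\na=\bar\partial$ to the vanishing of the $(0,1)$-parts of the connection matrix entries---is exactly the argument one expects, and there is nothing to add or compare.
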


\begin{definition} Let $E$ be a holomorphic \vb\ over $M$.
A {\it holomorphic partial connection} for $ E$ is a pair
$(F,\mbox{\boldmath $\delta$})$ given by  holomorphic subbundle $F$ of $TM$ and a
$\C$-linear homomorphism
\[
\mbox{\boldmath $\delta$} : \E\lra \F^\ast\otimes \E
\]
satisfying
\[
\mbox{\boldmath $\delta$}(fs)=\rho(df)\otimes s+f\mbox{\boldmath $\delta$}(s)\quad\text{for}\quad f\in{\cal O}\  \text{and}\  s\in\E.
\]
We shall also say that $\mbox{\boldmath $\delta$}$ is a holomorphic partial connection
\emph{along} $F$.
\end{definition}

\begin{remark}\label{holopartial}
A holomorphic partial connection $(F,\mbox{\boldmath $\delta$})$ for a
holomorphic vector bundle $E$ induces a partial connection in the sense of Definition \ref{partialconnection}  (cf. Remark \ref{holoCinfty}). Conversely, if  $(F,\delta)$ is a ($C^\infty$) partial connection such that
$\delta(s)(u)$ is holomorphic wherever $s$ and $u$ are holomorphic,
then it defines a holomorphic partial connection, and we shall say that $(F,\delta)$ is \emph{holomorphic.}

Note that, if there is an ``action" of $F$ on $E$, it naturally defines a partial connection for $E$ along $F$ (cf. \cite[Ch.II, 9]{Su2}).
\end{remark}

\begin{remark}\label{holopartialdue}
A holomorphic connection $\Na$ on  $E$ clearly gives a holomorphic partial connection $(TM,\Na)$. The connection $\na$ in Remark~\ref{holoCinfty} (that is, $\Na$ viewed as a $C^\infty$ connection) is a connection extending $(TM\oplus\overline{T}M,\Na\oplus\bar\partial)$.
\end{remark}

\begin{definition} Let $(F,\delta)$ be a partial holomorphic connection for $E$. An {\it $F$-connection} for $E$ is a connection for $E$ extending $(F\oplus \bar TM,\delta\oplus\bar\partial)$.
\end{definition}
Using holomorphic partial connections we have a vanishing theorem generalizing
Pro\-po\-si\-tion~\ref{holovanishAtiyah}\,:

\begin{theorem}\label{ABvanishing}
Let $M$ be a complex \mfd\ of dimension $n$ and $F$ a
holomorphic  subbundle of rank $r$ of $TM$. Let $E$ be a holomorphic \vb\ over $M$ and
$(F,\delta)$ a holomorphic partial connection for $E$.
 If $\na_0,\dots,\na_q$ are $F$-connections for $E$, then
 $$
\varphi^A(\na_0,\dots,\na_q)\equiv O
$$
for all homogeneous
symmetric polynomials $\varphi$ of degree $d>n-r$.
\end{theorem}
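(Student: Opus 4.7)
The plan is to adapt the classical Bott vanishing argument to the Atiyah setting by working on the interpolating connection $\tilde\nabla=\sum_{\alpha=0}^q t_\alpha\,\pi^*\nabla_\alpha$ on $\pi^*E\to M\times\Delta^q$ (where $\pi\colon M\times\Delta^q\to M$ is the projection and $(t_0,\ldots,t_q)$ are barycentric coordinates on the $q$-simplex), and then tracking bidegrees carefully.

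Since $F\subset TM$ is a holomorphic subbundle of rank $r$, I would first choose a local holomorphic frame $X_1,\ldots,X_n$ of $TM$ with $X_1,\ldots,X_r$ a frame of $F$, and let $\xi^1,\ldots,\xi^n$ be the dual holomorphic $(1,0)$-forms, so that $\bar\partial\xi^k=0$ for every $k$. The key structural observation is: if $(e_1,\ldots,e_\ell)$ is a local holomorphic frame of $E$ and $\nabla$ is any $F$-connection with connection matrix $\theta^j_i=\sum_k a^j_{i,k}\xi^k$, then the extension condition forces $a^j_{i,k}=\delta(e_i)^j(X_k)$ for $k\le r$; this coefficient is therefore holomorphic and \emph{does not depend on the specific $F$-connection}~$\nabla$. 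Consequently $\tilde\theta=\sum_\alpha t_\alpha\theta_\alpha$ has coefficients along $\xi^1,\ldots,\xi^r$ that are holomorphic and $t$-independent (all equal to $\delta^j_{i,k}$, since $\sum_\alpha t_\alpha=1$), while its coefficients $\tilde a^j_{i,k}$ along $\xi^{r+1},\ldots,\xi^n$ are smooth in $(x,t)$.

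Next I would decompose the curvature $\tilde\kappa=d\tilde\theta+\tilde\theta\wedge\tilde\theta$ by $M$-bidegree and $dt$-content. Only three families of entries appear: the pure $(2,0)$-on-$M$ type (from $\partial_M\tilde\theta+\tilde\theta\wedge\tilde\theta$), the pure $(1,1)$-on-$M$ type equal to $\bar\partial_M\tilde\theta$, and the $dt_\alpha\wedge$-$(1,0)$-on-$M$ type equal to $dt_\alpha\wedge\partial_{t_\alpha}\tilde\theta$. The last two simplify, by the structural observation together with $\bar\partial\xi^k=0$, to
\[
\sum_{k>r}\bar\partial\tilde a^j_{i,k}\wedge\xi^k
\quad\text{and}\quad
dt_\alpha\wedge\sum_{k>r}(\partial_{t_\alpha}\tilde a^j_{i,k})\,\xi^k,
\]
so each of their entries carries exactly one factor $\xi^k$ with $k\in\{r+1,\ldots,n\}$.

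By the multi-connection construction recalled in Section~\ref{subA} (cf.\ \cite[Proposition~5.4]{Su8}), the difference form $\varphi^A(\nabla_0,\ldots,\nabla_q)$ is the $(d,d-q)$-component of $\pi_*\bigl(\varphi(\tilde\nabla)\bigr)$. Integration along $\Delta^q$ extracts the $q$-fold $dt$-part, and a count of $M$-bidegrees—$q$ copies of $(1,0)$ plus $a$ copies of $(2,0)$ plus $b$ copies of $(1,1)$ give $(2a+b+q,\,b)=(d,d-q)$—forces $a=0$, $b=d-q$. Hence each contributing monomial is a $d$-fold wedge of entries \emph{all} of the $(1,1)$-type or $dt$-type listed above, carrying $d$ factors $\xi^k$ with $k\in\{r+1,\ldots,n\}$; since only $n-r$ such $1$-forms exist and each squares to zero, the pigeonhole principle gives $\xi^k\wedge\xi^k=0$ in every such monomial whenever $d>n-r$. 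The main obstacle is precisely this last bidegree accounting: the $(2,0)$-on-$M$ entries of $\tilde\kappa$ do not exhibit the $\xi^k$-with-$k>r$ structure (because $\partial\delta^j_{i,k}$ is a nonzero holomorphic $(1,0)$-form), and the argument works only because these entries are ruled out from the relevant $(d,d-q)$-component on bidegree grounds.
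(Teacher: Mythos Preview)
Your proof is correct and rests on the same core observation as the paper's: in a local holomorphic frame, the coefficients of the connection matrix along the $F$-directions $\xi^1,\ldots,\xi^r$ are forced to equal $\delta(e_i)^j(X_k)$, hence are holomorphic, so $\bar\partial\theta$ (the $(1,1)$-part of the curvature) involves only $\xi^{r+1},\ldots,\xi^n$ and any $d$-fold wedge with $d>n-r$ vanishes. The paper carries this out only for $q=0$ and appeals to the construction of difference forms in \cite{Su8} for general $q$; you instead work directly on $M\times\Delta^q$ and do the bidegree bookkeeping explicitly. Your extra observation---that the $F$-direction coefficients, being equal to $\delta(e_i)^j(X_k)$ for every $F$-connection and hence $t$-independent, make the $dt_\alpha\wedge\partial_{t_\alpha}\tilde\theta$ terms also lie in the span of $\xi^{r+1},\ldots,\xi^n$---is exactly what makes the general case go through, and your count $(q+2a+b,\,b)=(d,d-q)\Rightarrow a=0$ correctly eliminates the $(2,0)$-type curvature entries from the relevant component. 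So the two arguments are the same in spirit; yours is simply more self-contained for $q\ge1$.
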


\begin{proof} For simplicity, we prove the theorem for the case  $q=0$. The case for general
$q$  follows from the construction of the difference form (see \cite{Su8}).

Thus let $\na$  be an $F$-connection for $E$.  Note that the problem is local; so choose a  holomorphic frame $s^{(\ell)}=(s_1,\dots,s_\ell)$ of
$E$ on some open set $U$, and let $\theta$ be the connection matrix of $\na$ with respect to  $s^{(\ell)}$. Taking a smaller $U$, if necessary, we may write $TM=F\oplus G$ for some holomorphic vector bundle $G$ of rank $n-p$ on $U$. We have the corresponding decomposition $T^*M=F^*\oplus G^*$. Taking, again if necessary, a smaller $U$, we can choose a holomorphic frame $u^{(r)}=(u_1,\dots,u_r)$ of $F$ on $U$. Let $(u^*_1,\dots,u^*_r)$ be the holomorphic frame of $F^*$ dual
to $u^{(r)}$ and $(v^*_1,\dots,v^*_{n-r})$ a holomorphic frame of $G^*$ on $U$.
Since $\na$ is of type $(1,0)$, each entry of $\theta$ may be written as
$\sum_{j=1}^pa^j u^*_j+\sum_{k=1}^{n-r}b^k v^*_k$ with $a^j$,~$b^k\in C^\infty(U)$.  By definition, we have $\na(s_i)(u_j)=\delta(s_i)(u_j)$, which is holomorphic. Thus each $a^j$ is holomorphic and hence the corresponding entry of $\kappa^{1,1}=\bar\partial\theta$ is of the form
\[
\sum_{k=1}^{n-r}\bar\partial b^k\wedge v^*_k,
\]
which  yields the theorem.
\end{proof}

Another proof of the same theorem can be given along the lines of the original
Bott vanishing theorem and of \cite[Theorem~6.1]{ABT}\,:
\smallbreak
\begin{proof}[Second proof of Theorem \ref{ABvanishing}]
Let $\na$ and $TM=F\oplus G$ be chosen as in the previous proof.
The curvature $K$ of~$\nabla$ satisfies
\[
K(X,\overline{Z})=0
\]
for all sections $X$ of $F$ and $\overline{Z}$ of $\overline{T}M$. Hence, if
$\{u_1^*, \ldots, u_r^*, v_1^*,\ldots, v_{n-r}^*,
d\bar{z}_1,\ldots, d\bar{z}_n\}$ is a basis of
$(T_\R^c M)^\ast$ with respect to the decomposition
$T_\R^c M\otimes=F\oplus G\oplus \bar TM$, it follows that the
$(1,1)$-part of each entry of the curvature matrix of $K$  in
such a frame is of the form
\[
\sum_{j=1}^{n-r}\sum_{k=1}^n
a^j_k v^*_j\wedge d\bar{z}_k,
\]
and again the assertion follows.
\end{proof}

\begin{remark} \label{pvt}
The previous vanishing theorem is the analogous of the Bott
vanishing theorem for Chern forms. As shown in \cite[Theorem
6.1]{ABT}, under the same hypotheses we have
$\varphi(\na)=0$ for a symmetric homogeneous polynomial
$\varphi$ of degree $d> n-r+[\frac r 2]$, where $[q]$ denotes the integer part of~$q$.

See Section \ref{example} below for an example where the Atiyah form vanishes but the corresponding Chern form does not.
\end{remark}

\begin{remark}
A version of this Bott type vanishing theorem for Atiyah classes is
proved in \cite[Proposition (3.3)]{BB1} and \cite[Proposition 5.1]{CL} by  cohomological
arguments (actually, in the latter the authors assume $F$
to be involutive, but involutiveness is not really needed in
their argument). The above theorem gives a more precise form of the vanishing
theorem in the sense that it gives the vanishing at the form level.
\end{remark}

Putting together Theorem \ref{ABvanishing}, Remark~\ref{pvt} and  Propositions~\ref{compactKAtiyah} and~\ref{holovanishAtiyah}, we have

\begin{theorem}\label{null}
Let $E$ be a holomorphic vector bundle on a complex manifold~$M$.
Assume that $E$ admits a holomorphic connection $\Na$, and let $\na$ be correspoonding
$(1,0)$-connection (cf. Remark \ref{holopartialdue}). Let $\varphi$ be a a symmetric homogeneous polynomial  of degree $d>0$.
Then $\varphi^A(\na)=0$. Moreover if $d>[\frac n 2]$, then $\varphi(\na)=0$.
Furthermore, if $M$ is compact K\"ahler then $\varphi(E)=0$ always.
\end{theorem}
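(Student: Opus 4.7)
The plan is to combine four results already established in the paper: Theorem~\ref{ABvanishing}, Remark~\ref{pvt}, Proposition~\ref{holovanishAtiyah}, and Proposition~\ref{compactKAtiyah}. The single observation that unlocks the first two assertions is that, by Remark~\ref{holopartialdue}, the $(1,0)$-connection $\na$ associated to the holomorphic connection $\Na$ is precisely an $F$-connection for the choice $F=TM$, which has maximal rank $r=n$.

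With this identification, Theorem~\ref{ABvanishing} applied with $q=0$ and $r=n$ requires only the degree condition $d>n-r=0$, which is the hypothesis of the theorem, and yields $\varphi^A(\na)=0$ at the form level. Substituting the same $r=n$ into the refined bound of Remark~\ref{pvt} gives the stronger vanishing $\varphi(\na)=0$ provided $d>n-r+[\frac r 2]=[\frac n 2]$, which is the second assertion.

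For the third assertion, Proposition~\ref{holovanishAtiyah} first gives $a^p(E)=0$ in $H^{p,p}_{\bar\partial}(M)$ for every $p\ge 1$, and since $\varphi$ is homogeneous of positive degree the polynomial $P$ expressing $\varphi^A$ in terms of $a^1,a^2,\ldots$ has no constant term, so $\varphi^A(E)=0$. The compact K\"ahler hypothesis then enters via Proposition~\ref{compactKAtiyah}, which I would use to transfer the vanishing to the de~Rham class. The only point that deserves attention---and the nearest thing to an obstacle in this otherwise short proof---is that Proposition~\ref{compactKAtiyah} is stated only for the single-degree identity $H(a^p(E))=c^p(E)$, whereas we need $H(\varphi^A(E))=\varphi(E)$. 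I would address this by repeating the underlying argument at the form level: for the Hermitian metric connection $\na^h$ the curvature is of pure type $(1,1)$, so $c^p(\na^h)=a^p(\na^h)$ for every $p$, hence $\varphi(\na^h)=\varphi^A(\na^h)$, and passing to cohomology yields $H(\varphi^A(E))=\varphi(E)$. Combined with $\varphi^A(E)=0$, this gives $\varphi(E)=0$ in $H^{2d}_d(M)$.
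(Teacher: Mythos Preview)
Your proof is correct and follows exactly the route the paper indicates: the preamble to Theorem~\ref{null} simply says it is obtained by ``putting together Theorem~\ref{ABvanishing}, Remark~\ref{pvt} and Propositions~\ref{compactKAtiyah} and~\ref{holovanishAtiyah}'', and you have supplied precisely those combinations, including the key identification from Remark~\ref{holopartialdue} that $\na$ is an $F$-connection for $F=TM$ with $r=n$. Your extension of Proposition~\ref{compactKAtiyah} from $a^p$ to general $\varphi$ via the metric connection is the natural way to fill in that step, and it is correct.
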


\lsection{Partial connection for the normal bundle of an invariant submanifold}
\label{CSaction}

Let $M$ be a complex manifold.  A (non-singular holomorphic) distribution on $M$ is a holomorphic subbundle $F$  of $TM$.
The rank of the distribution is the rank of $F$.
In this section, we construct a partial connection for the normal bundle of an invariant submanifold of a distribution.

Let $V$ be a complex sub\mfd\  of $M$. We  denote by ${\cal I}_V\subset\O$ the idealsheaf of holomorphic function germs
vanishing on~$V$ so that ${\cal O}_V={\cal O}/{\cal I}_V$ is the sheaf of germs of holomorphic functions on $V$.
Denoting by $N_V$  the normal bundle of $V$ in $M$, we have the exact sequence
\[
O\lra TV\lra TM|_V\buildrel\pi\over\lra N_V\lra O.
\]

We say that a distribution $F$ on $M$ leaves $V$ invariant (or $F$ is tangent to $V$), if $F|_V\subset TV$.

\begin{theorem}\label{holo-normal}
Let $V$ be a complex submanifold of $M$. If a distribution $F$ on $M$ leaves $V$ invariant, there exists a holomorphic partial connection $\mbox{\boldmath $\delta$}$ for the
normal bundle $N_V$ along $F|_V$.
\end{theorem}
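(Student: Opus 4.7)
My plan is to construct $\mbox{\boldmath $\delta$}$ via the classical Atiyah--Bott formula using Lie brackets. Given a local holomorphic section $s$ of $N_V$ on an open set $U\subseteq V$, I lift it to a local holomorphic section $\tilde s$ of $TM$ on a neighborhood $\tilde U\subseteq M$ of $U$ so that $\pi(\tilde s|_U)=s$; given a local holomorphic section $X$ of $F|_V$ on $U$, I extend it to a local holomorphic section $\tilde X$ of $F$ on $\tilde U$; then I set
$$
\mbox{\boldmath $\delta$}(s)(X):=\pi\bigl([\tilde X,\tilde s]|_U\bigr).
$$
Both lifts exist locally: $\pi\colon TM|_V\to N_V$ admits local holomorphic splittings (via coordinates adapted to $V$), and the fact that $F\subseteq TM$ is a holomorphic subbundle lets one extend $X$ from $V$ into $F$ on a full neighborhood.

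The crux is well-definedness. For two extensions $\tilde X,\tilde X'$ of $X$ inside $F$, their difference is locally of the form $\sum_i f_iZ_i$ with $f_i\in\mathcal I_V$ and $Z_i$ local sections of $F$; expanding the Lie bracket gives $[\sum_i f_iZ_i,\tilde s]=\sum_i f_i[Z_i,\tilde s]-\sum_i\tilde s(f_i)Z_i$, whose restriction to $V$ has its first summand vanishing (since $f_i|_V=0$) and its second summand in $F|_V\subseteq TV$, hence projects to zero in $N_V$. For two lifts $\tilde s,\tilde s'$ of $s$, their difference $w$ is a section of $TM$ with $w|_V\in TV$, and I need to show $[\tilde X,w]|_V\in TV$. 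Tangency to $V$ is exactly the condition that a vector field send $\mathcal I_V$ into $\mathcal I_V$; thus $w(\mathcal I_V)\subseteq\mathcal I_V$, while the invariance hypothesis $F|_V\subseteq TV$ gives $\tilde X(\mathcal I_V)\subseteq\mathcal I_V$. Combining these, for every $f\in\mathcal I_V$,
$$
[\tilde X,w](f)=\tilde X\bigl(w(f)\bigr)-w\bigl(\tilde X(f)\bigr)\in\mathcal I_V,
$$
which is exactly the condition $[\tilde X,w]|_V\in TV$, as required.

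With well-definedness in place, $\mathbb C$-linearity in $s$ is clear, and the Leibniz rule follows from a short bracket computation: for $f\in\mathcal O_V(U)$, pick a holomorphic extension $\tilde f$ to $\tilde U$; then $\tilde f\tilde s$ lifts $fs$, and the identity $[\tilde X,\tilde f\tilde s]=\tilde X(\tilde f)\,\tilde s+\tilde f\,[\tilde X,\tilde s]$, restricted to $U$ and projected to $N_V$, yields $\mbox{\boldmath $\delta$}(fs)(X)=X(f)\,s+f\,\mbox{\boldmath $\delta$}(s)(X)=\rho(df)(X)\,s+f\,\mbox{\boldmath $\delta$}(s)(X)$, since $\rho(df)(X)=df(X)=X(f)$ for $X\in F|_V$. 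The main obstacle is the step proving independence under a change of the lift $\tilde s$: this is the only place where the invariance of $V$ under $F$ enters essentially, through $\tilde X(\mathcal I_V)\subseteq\mathcal I_V$, and without this hypothesis the bracket $[\tilde X,w]|_V$ would in general fail to be tangent to $V$, so that $\mbox{\boldmath $\delta$}$ would fail to descend from sections of $TM$ to sections of $N_V$.
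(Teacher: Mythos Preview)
Your proof is correct and follows essentially the same approach as the paper: both define $\mbox{\boldmath $\delta$}(s)(X)=\pi([\tilde X,\tilde s]|_V)$ via the Lie bracket of lifts and then verify well-definedness. Your argument is slightly more conceptual---you handle independence of lifts using the ideal-theoretic characterization of tangency ($w|_V\in TV\iff w(\mathcal I_V)\subseteq\mathcal I_V$) and the decomposition $\tilde X-\tilde X'=\sum f_iZ_i$ with $f_i\in\mathcal I_V$, whereas the paper carries out the $\tilde X$-independence check in adapted local coordinates by decomposing the generators of $F$ into tangential, normal, and higher-order pieces---but the substance is the same.
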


\begin{proof}
Let $x$ be a point in $V$ and take $u\in {\cal O}_V(F|_V)_x$ and  $s\in {\cal O}_V(N_V)_x$. Let $\tilde{u}\in {\cal F}_x$ and
$\widetilde{s}\in\Theta_x$ such that $\tilde{u}|_V=u$ and
$\pi(\widetilde{s}|_V)=s$, where $\pi\colon {\cal O}_V(TM|_V)\to {\cal O}_V(N_V)$ is the
natural projection. Define $\mbox{\boldmath $\delta$} :{\cal O}_V(N_V)\lra {\cal O}_V(F|_V)^*\otimes {\cal O}_V(N_V)$ by
\[
\mbox{\boldmath $\delta$}(s)(u) :=\pi([\tilde{u}, \widetilde{s}]|_V).
\]
It is easy to show that $\mbox{\boldmath $\delta$}$ does not depend on the choice
of $\widetilde{s}$. As for $\tilde{u}$, let $F$ be locally
generated by
holomorphic sections $\tilde v_1, \ldots, \tilde v_r$ of $TM$, where $r=\textrm{rank}\,F$. Choose
local coordinates $\{z_1,\ldots,
z_n\}$ on $M$ such that $V=\{z_{m+1}=\ldots=z_n=0\}$. We shall denote by $T_k$
any local vector field of the form $\sum_{j=1}^m a^j
\frac{\partial}{\partial z_j}$ with $a_j\in \mathcal I_V^k$
(where clearly $\mathcal I_V^0=\mathcal O$); by $N_k$ any local vector field of
the form $\sum_{j=m+1}^n a^j \frac{\partial}{\partial z_j}$ with
$a^j\in \mathcal I_V^k$; and by $R_k$ any local vector field of the form
$\sum_{j=1}^n a^j \frac{\partial}{\partial z_j}$ with $a^j\in
\mathcal I_V^k$.

Since $F|_V\subset TV$, it follows that $\tilde
v_j=T_0+N_1+R_2$ for $j=1,\ldots, r$. Therefore, since the
rank of $F$ and the rank of $F|_V$ are the same,  if
\[
u=\sum_{j=1}^r g^j \tilde v_j|_V
\]
with $g^j\in \mathcal O_V$, then
\[
\tilde{u}=\sum_{j=1}^r \tilde g^j \tilde v_j
\]
with $\tilde g^j\in \mathcal O$ such that
$\tilde g^j|_V=g^j$. Denoting by
$g^j$ the natural extension $(z_1,\ldots, z_n)\mapsto
g^j(z_{m+1},\ldots, z_n)$, it follows that
\[
\tilde g^j-g^j=h^j\in\mathcal I_V.
\]
Hence
\[
\tilde{u}=\sum_{j=1}^r g^j \tilde v_j + \sum_{j=1}^r
h^j \tilde v_j
\]
But
\[
h^j \tilde v_j=h^j (T_0+N_1+R_2)=T_1+R_2,
\]
and it is easy to see that this latter term does not give any contribution to the
expression $\pi([\tilde{u}, \widetilde{s}]|_V)$. From this it
follows that $\mbox{\boldmath $\delta$}$ is well defined, and it is easy to check that it is a
holomorphic partial connection.
\end{proof}

Note that the above partial connection  is already known for foliations (cf. {\it e.g.}, \cite{LS}).
From Theorems~\ref{holo-normal} and~\ref{ABvanishing}, we have

\begin{corollary}\label{cor0} Let $V$ be a complex submanifold of $M$ of dimension $m$ and $F$ a distribution on $M$ of rank $r$ leaving $V$ invariant. Also let $\na$ be a $(1,0)$-connection  for $N_V$ extending the partial connection $\mbox{\boldmath $\delta$}$ of Theorem \ref{holo-normal}.
Then $\varphi^A(\na)=O$ for all symmetric homogeneous
polynomial $\varphi$ of degree $d> m-r$.
\end{corollary}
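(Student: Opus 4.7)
The plan is to reduce the corollary to a direct application of Theorem~\ref{ABvanishing} taken with base manifold $V$ in place of $M$. First I note that since $F$ leaves $V$ invariant, the restriction $F|_V$ is a holomorphic subbundle of $TV$ of the same rank $r$, and Theorem~\ref{holo-normal} provides a holomorphic partial connection $\boldsymbol{\delta}$ for $N_V$ along $F|_V$, viewed as a partial connection on the complex manifold $V$ of dimension $m$.

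Next I verify that the hypothesis on $\na$ makes it an $F|_V$-connection for $N_V$ in the sense of Definition~\ref{conext}. Since $\na$ is of type $(1,0)$ on the holomorphic bundle $N_V$, by the lemma characterizing $(1,0)$-connections it automatically extends the antiholomorphic partial connection $(\overline{T}V, \bar\partial)$. Combined with the standing hypothesis that $\na$ extends $\boldsymbol{\delta}$, this says that $\na$ extends $(F|_V \oplus \overline{T}V,\ \boldsymbol{\delta} \oplus \bar\partial)$, which is precisely the definition of an $F|_V$-connection for $N_V$.

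With these identifications in place, Theorem~\ref{ABvanishing}, applied on $V$ (of dimension $m$) with subbundle $F|_V$ (of rank $r$) and taking $q = 0$, yields $\varphi^A(\na) \equiv O$ for every symmetric homogeneous polynomial $\varphi$ of degree $d > m - r$, which is the desired conclusion. The proof is a straightforward assembly of the two cited theorems, so I do not expect any real obstacle; the substantive work already sits inside the proofs of Theorems~\ref{holo-normal} and~\ref{ABvanishing}. The only point worth emphasising in writing is that one must apply \ref{ABvanishing} on $V$ rather than on $M$, since the partial connection supplied by \ref{holo-normal} lives on $V$ and its rank $r$ is compared against $\dim V = m$, not $\dim M = n$.
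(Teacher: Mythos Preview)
Your argument is correct and is exactly the approach the paper intends: the corollary is stated immediately after ``From Theorems~\ref{holo-normal} and~\ref{ABvanishing}, we have'', with no further proof given. Your write-up simply makes explicit the two points implicit in that citation, namely that $F|_V\subset TV$ has rank $r$ so Theorem~\ref{ABvanishing} is applied on $V$ (dimension $m$), and that a $(1,0)$-connection extending $\boldsymbol{\delta}$ is automatically an $F|_V$-connection.
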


We also get the following obstruction to the existence of
distributions (not necessarily integrable) tangent to a given
submanifold\,:

\begin{corollary}\label{cor}
Let $V$ and $F$ be as in Corollary \ref{cor0}.
Then $\varphi^A(N_V)=O$ for all symmetric homogeneous
polynomial $\varphi$ of degree $d> m-r$.

Moreover, if $V$ is compact K\"ahler then we have
$\varphi(N_V)=O$ for all symmetric homogeneous polynomial
$\varphi$ of degree $d> m-r$.
\end{corollary}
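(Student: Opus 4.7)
The plan is to assemble the two assertions from material already established in the excerpt: Theorem \ref{holo-normal}, the extension lemma for partial connections, Corollary \ref{cor0}, and (for the Kähler part) Proposition \ref{compactKAtiyah}. The whole argument is essentially passing from a form-level vanishing to a cohomology-class vanishing.

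For the first assertion, I would argue as follows. Since $F$ is a distribution on $M$ leaving $V$ invariant, Theorem \ref{holo-normal} produces a holomorphic partial connection $\boldsymbol{\delta}\colon\E_{N_V}\to\F|_V^{\,*}\otimes\E_{N_V}$ for $N_V$ along $F|_V$. By the lemma asserting that every partial connection admits an extension (together with the characterization that $(1,0)$-connections on a holomorphic bundle are precisely those extending $(\overline{T}V,\bar\partial)$), one can then build a $(1,0)$-connection $\nabla$ on $N_V$ which extends $(F|_V\oplus\overline{T}V,\boldsymbol{\delta}\oplus\bar\partial)$, i.e., an $F|_V$-connection in the sense of the excerpt. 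Corollary \ref{cor0} applied to this very $\nabla$ yields $\varphi^A(\nabla)\equiv O$ on $V$ for every symmetric homogeneous polynomial $\varphi$ of degree $d>m-r$, where $m=\dim V$ and $r=\operatorname{rk}F=\operatorname{rk}F|_V$. But $\varphi^A(N_V)$ is by definition the Dolbeault cohomology class of $\varphi^A(\nabla)$ for an arbitrary $(1,0)$-connection; hence $\varphi^A(N_V)=O$ in $H^{d,d}_{\bar\partial}(V)$.

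For the Kähler statement, I would invoke Proposition \ref{compactKAtiyah}, which gives a canonical injection $H\colon H^{p,p}_{\bar\partial}(V)\hookrightarrow H^{2p}_d(V)$ satisfying $H(a^p(N_V))=c^p(N_V)$ for all $p\ge 1$. Since writing $\varphi=P(\sigma_1,\sigma_2,\dots)$ gives $\varphi^A(N_V)=P(a^1(N_V),a^2(N_V),\dots)$ and $\varphi(N_V)=P(c^1(N_V),c^2(N_V),\dots)$, and the Hodge injection is compatible with the cup product on a compact Kähler manifold, one gets $H(\varphi^A(N_V))=\varphi(N_V)$. Combined with the vanishing $\varphi^A(N_V)=O$ from the first part and the injectivity of $H$, this forces $\varphi(N_V)=O$ in $H^{2d}_d(V)$ for all $d>m-r$.

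The only genuinely substantive point—and the one step I would double-check—is the verification that the $(1,0)$-connection produced by the extension procedure is indeed of the form required to invoke Corollary \ref{cor0}, namely that it simultaneously extends $\boldsymbol{\delta}$ along $F|_V$ and the Dolbeault operator along $\overline{T}V$; otherwise the argument is a straightforward chain of implications. There is no genuine obstacle, since the extension lemma was stated for arbitrary partial connections on complex vector bundles and $(F|_V\oplus\overline{T}V, \boldsymbol{\delta}\oplus\bar\partial)$ is itself such a partial connection.
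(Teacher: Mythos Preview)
Your argument is correct and matches the paper's intended reasoning: the paper states Corollary~\ref{cor} without proof, as an immediate consequence of Corollary~\ref{cor0} (form-level vanishing for an $F|_V$-connection, hence class-level vanishing of $\varphi^A(N_V)$) together with Proposition~\ref{compactKAtiyah} for the K\"ahler part. A slightly more direct route for the K\"ahler statement is to note, as in the proof of Proposition~\ref{compactKAtiyah}, that for the metric connection $\nabla^h$ one has $\varphi^A(\nabla^h)=\varphi(\nabla^h)$ as forms, so $H(\varphi^A(N_V))=\varphi(N_V)$ without invoking cup-product compatibility; but your version is equally valid.
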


\lsection{Residues of singular distributions}\label{singdist}

A general theory of singular holomorphic distributions can be developed modifying the one for singular holomorphic foliations  (cf. \cite{BB2}, \cite[Ch.VI]{Su2}), omitting the integrability condition.

Let $M$ be a complex \mfd\ of dimension $n$.   For simplicity, we assume that $M$ is connected.

\begin{definition} A (singular)  {\it holomorphic distribution} of rank $r$ on $M$  is a coherent sub-${\cal O}_M$-module ${\cal F}$ of  rank $r$ of $\Theta$.
\end{definition}

In the above, the rank of $\F$ is the rank of its locally free part. Note that, since $\Theta$ is locally free, the coherence of ${\cal F}$ here simply means that it is locally finitely generated. We call ${\cal F}$ the {\it tangent sheaf} of the distribution and the quotient ${\cal N}_{\cal F}=\Theta/{\cal F}$ the {\it normal sheaf} of the distribution.

The singular set $S({\cal F})$ of a distribution ${\cal F}$ is defined to be the singular set of the coherent sheaf
${\cal N}_{\cal F}$\,:
\[
S({\cal F})={\rm Sing}({\cal N}_{\cal F})=\{\, x\in M\mid {{\cal N}_{\cal F}}_x\ \text{is not}\ {\cal O}_x\text{-free}\,\}.
\]

Note that ${\rm Sing}({\cal F})\subset S({\cal F})$. Away from $S({\cal F})$, the sheaf ${\cal F}$ defines a non-singular distribution of rank $r$.

In particular, if ${\cal F}$ is locally free of rank $r$, in a \nbd\ of each point in $M$ it is generated by $r$ holomorphic vector fields $v_1,\dots, v_r$, without relations, on $U$. The set $S({\cal F})\cap U$ is the set of points where the vector fields fail to be linearly independent.

Singular distributions can be dually defined in terms of cotangent sheaf. Thus a {\it singular distribution of corank $q$} is a coherent subsheaf $\G$ of rank $q$ of $\Omega^1$. Its annihilator
\[
\F=\G^a=\{\, v\in\Theta\mid \langle v,\o\rangle=0\ \ \text{for all} \ \o\in\G\,\}
\]
 is a singular distribution of rank $r=n-q$.

Corollary \ref{cor} in the previous section has a slightly stronger version when
the rank of the distribution is equal to the dimension of the
submanifold. Namely

\begin{proposition}
Let $V\subset M$ be a complex submanifold of dimension $m$. Let
${\cal F}$ be a (possibly singular) holomorphic distribution of
rank $m$. Assume that ${\cal F}\otimes{\cal O}_V\subset {\cal O}_V(TV)$ and that $\Sigma=S({\cal F})\cap V$ is  an analytic
subset of $V$ of codimension at least $2$. Then $a^p(N_V)=O$
for all $p>0$.

Moreover, if $V$ is compact K\"ahler then $c^p(N_V)=O$ for all
$p>0$.
\end{proposition}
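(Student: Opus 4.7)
The plan is to produce a global holomorphic connection for $N_V$ and then invoke Theorem~\ref{null}, which yields both vanishing statements at once; the only nontrivial ingredient is extending such a connection across the codimension-two subset $\Sigma$.

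First I would work on $V \setminus \Sigma$. Since $\F$ is locally free of rank $m$ off $S(\F)$, it defines there a non-singular holomorphic distribution $F$ of rank $m$ in $TM$, and the hypothesis $\F \otimes \O_V \subset \O_V(TV)$ forces $F$ to be tangent to $V$ on $V \setminus \Sigma$. Being a rank-$m$ subbundle of the rank-$m$ bundle $TV|_{V \setminus \Sigma}$, $F|_{V\setminus\Sigma}$ must equal $TV|_{V\setminus\Sigma}$. Theorem~\ref{holo-normal} then yields a holomorphic partial connection for $N_V|_{V\setminus\Sigma}$ along $TV|_{V\setminus\Sigma}$, which is simply a holomorphic connection $\Na$ for $N_V|_{V\setminus\Sigma}$ in the sense of Definition~\ref{caconnection}.

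Next I would extend $\Na$ holomorphically across $\Sigma$. Pick a trivializing open $W \subset V$ with a holomorphic frame $(e_1,\dots,e_\ell)$ of $N_V|_W$; the connection matrix $\theta$ of $\Na$ in this frame is a matrix of holomorphic $1$-forms on $W \setminus \Sigma$. Since the sheaf of holomorphic $1$-forms on $V$ is locally free and $W \cap \Sigma$ is an analytic subset of codimension $\geq 2$, the Hartogs--Riemann extension theorem produces a unique holomorphic extension of each entry of $\theta$ to $W$. On the overlap $W \cap W'$ of two such trivializations the frame-change rule~(\ref{framechange}) involves only the holomorphic transition matrix and its differential, so the extended matrices satisfy that rule on $(W \cap W') \setminus \Sigma$ and hence, by uniqueness of the holomorphic extension, on all of $W \cap W'$. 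These local extensions therefore glue into a global holomorphic connection for $N_V$ on $V$.

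Applying Theorem~\ref{null} to $N_V$ with this connection then gives $a^p(N_V) = 0$ for every $p > 0$, and, when $V$ is compact K\"ahler, also $c^p(N_V) = 0$ for every $p > 0$. The main obstacle is the extension step: one needs to recognize that, once a holomorphic frame is fixed, a holomorphic connection is encoded by a section of a locally free coherent sheaf (a matrix of holomorphic $1$-forms), which is precisely what lets the codimension-two hypothesis bite via Hartogs--Riemann, and then to observe that overlap compatibility is automatic because the frame-change rule is itself holomorphic and Hartogs extensions are unique.
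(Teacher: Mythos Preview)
Your proof is correct and follows essentially the same route as the paper: construct a holomorphic connection for $N_V$ on $V\setminus\Sigma$ via Theorem~\ref{holo-normal}, extend it across the codimension-two set $\Sigma$ by applying the Riemann extension theorem to the entries of the connection matrix in a local holomorphic frame, and conclude with Theorem~\ref{null}. Your write-up is in fact slightly more explicit than the paper's in two places: you spell out why the partial connection of Theorem~\ref{holo-normal} is a genuine holomorphic connection (because $F|_{V\setminus\Sigma}$, being a rank-$m$ subbundle of the rank-$m$ bundle $TV|_{V\setminus\Sigma}$, must coincide with it), and you verify that the locally extended connection matrices glue on overlaps via the frame-change rule and uniqueness of the holomorphic extension.
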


\begin{proof}
We shall show that there exists a holomorphic connection for
$N_V$, then the result follows from Theorem \ref{null}.

By Theorem \ref{holo-normal} there exists a holomorphic
connection $\na$ for $N_V$ on $V\setminus \Sigma$. We are going to
prove that such a connection extends holomorphically through
$\Sigma$. Indeed, let $p\in \Sigma$. Let $U$ be an open
neighborhood of $p$ in $V$ such that $N_V|_U$ is trivial. Let
$e_1,\ldots, e_k$ be a holomorphic frame for $N_V|_U$ (here
$k=\dim M-m$). Let $\omega$ be the connection matrix of $\nabla$ on $U\setminus\Sigma$.
With respect to
local coordinates $(z_1,\ldots, z_m)$ on $U$, the entries of
$\omega$ are $(1,0)$-forms of the type $\sum_j a_j(z) dz_j$ with
$a_j\colon  U\setminus\Sigma\to \C$ holomorphic. Since  $\Sigma$ has
codimension at least two in $U$, Riemann's extension theorem
implies that each $a_j$ admits a (unique) holomorphic extension to $U$.
In this way we have extended $\nabla$ over~$U$, and hence
$N_V$ admits a holomorphic connection.
\end{proof}

Now suppose $\F$ is a singular distribution of rank $r$ and set $U_0=M\setminus S$ and $S=S(\F)$. Let $U_1$ be a neighborhood of $S$ in $M$ and consider the covering ${\cal U}=\{U_0,U_1\}$. On $U_0$, we have a subbundle
$F_0$ of $TM$ such that $\F|_{U_0}={\cal O}(F_0)$.

Suppose $E$ is a holomorphic \vb\ on $M$ admitting a partial holomorphic connection $(F_0,\delta)$ on $U_0$. Then, choosing an $F_0$-connection $\na_0$ on $U_0$ and a $(1,0)$-connection $\na_1$ on $U_1$, for a symmetric homogeneous polynomial $\varphi$ of degree $d>n-r$, we have the localization $\varphi^A(E,\F)$ in $H_{\bar D}^{d,d}({\cal U}, U_0)$ of $\varphi^A(E)$ in $H_{\bar D}^{d,d}({\cal U})\simeq H_{\bar\partial}^{d,d}(M)$ and, via the $\bar\partial$-Alexander homomorphism, the corresponding residues.

We restate the residue theorem (Theorem \ref{residueth})  in this context\,:

\begin{theorem}\label{residuethdist} In the above situation, suppose $S$ has a finite number of connected components~$\{S_\lambda\}_\lambda$.
 Then\,:
\begin{enumerate}
\item[{\rm(1)}] For each $\lambda$ we have the residue ${\rm Res}_{\varphi^A}(\F,E;U_\lambda)$
in   $H_{\bar\partial}^{n-d,n-d}(U_\lambda)^*$;
\item[{\rm(2)}] if  $M$ is compact, then
$$
\sum_\lambda (i_\lambda)_*{\rm
Res}_{\varphi^A}(\F,E;U_\lambda)=KS(\varphi^A(E))
\qquad\text{in}\ \ H_{\bar\partial}^{n-d,n-d}(M)^*.
$$
\end{enumerate}
\end{theorem}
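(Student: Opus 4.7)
The plan is to verify that the data $(\F,\varphi)$ fits the abstract localization scheme of Section~4, and then invoke Theorem~\ref{residueth} essentially verbatim. The only substantive input beyond the general machinery is the Bott-type vanishing Theorem~\ref{ABvanishing}, which supplies the ``adaptedness'' needed to localize.

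First, I would unwind the setup away from the singular locus. On $U_0=M\setminus S$ the coherent sheaf $\F|_{U_0}$ is locally free of rank~$r$ and thus defines a holomorphic subbundle $F_0\subset TM|_{U_0}$. By hypothesis, $E$ carries a holomorphic partial connection $(F_0,\delta)$ over $U_0$, so by the lemma that every partial connection admits an extension (combined with $\bar\partial$ on the antiholomorphic side, as in Remark~\ref{holopartialdue}), the class $\mathcal C$ of $F_0$-connections $\na_0$ for $E$ on $U_0$ is nonempty. Theorem~\ref{ABvanishing} now asserts precisely that, for a symmetric homogeneous polynomial $\varphi$ of degree $d>n-r$, both $\varphi^A(\na_0)\equiv O$ and $\varphi^A(\na_0,\na_0')\equiv O$ for any $\na_0,\na_0'\in\mathcal C$; this is exactly the condition that $\varphi$ be adapted to $\F$ in the sense of Section~4, with $F_0$-connections playing the role of ``special'' connections.

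Next I would run the localization machine. Choose a neighborhood $U_1$ of $S$, form the covering $\U=\{U_0,U_1\}$, fix an $F_0$-connection $\na_0$ on $U_0$ and any $(1,0)$-connection $\na_1$ on $U_1$, and consider the \v Cech-Dolbeault cocycle
\[
\varphi^A(\na_*)=\bigl(\varphi^A(\na_0),\varphi^A(\na_1),\varphi^A(\na_0,\na_1)\bigr)=\bigl(0,\varphi^A(\na_1),\varphi^A(\na_0,\na_1)\bigr).
\]
Since its $U_0$-component vanishes, this cocycle lies in $A^{d,d}(\U,U_0)$ and therefore defines a class $\varphi^A(E,\F)\in H^{d,d}_{\bar\partial}(M,M\setminus S)$; independence of the choices of $\na_0\in\mathcal C$ and of~$\na_1$ follows from the standard argument of Section~4 (using the vanishing of difference forms for pairs of special connections). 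If $S=\bigsqcup_\lambda S_\lambda$ has finitely many components and one picks pairwise disjoint neighborhoods $U_\lambda$, applying the $\bar\partial$-Alexander homomorphism~(\ref{3.5}) componentwise produces the residues ${\rm Res}_{\varphi^A}(\F,E;U_\lambda)\in H^{n-d,n-d}_{\bar\partial}(U_\lambda)^*$, represented by the functional (\ref{reshom}). This proves~(1).

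Finally, for~(2) I would appeal to Proposition~\ref{commutativity}: under $j^\ast$ the class $\varphi^A(E,\F)$ maps to $\varphi^A(E)\in H^{d,d}_{\bar\partial}(M)$, and commutativity of the square there reads $KS\circ j^\ast = i_\ast\circ\bar A$ on each component. Summing over~$\lambda$ and using compactness of $M$ to collect the contributions into the single Kodaira-Serre pairing on $M$ yields
\[
\sum_\lambda (i_\lambda)_\ast{\rm Res}_{\varphi^A}(\F,E;U_\lambda)=KS(\varphi^A(E)).
\]
The main ``obstacle'' is really just bookkeeping: checking that the adaptedness conditions of Section~4 are met by $F_0$-connections, which is supplied directly by Theorem~\ref{ABvanishing}; once this is in place, the statement is a formal consequence of Theorem~\ref{residueth} applied to the geometric object $\gamma=\F$.
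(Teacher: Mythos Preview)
Your proposal is correct and follows exactly the paper's approach: the paper sets up the data (the subbundle $F_0$ on $U_0$, the holomorphic partial connection $(F_0,\delta)$, an $F_0$-connection $\na_0$ and a $(1,0)$-connection $\na_1$), observes that Theorem~\ref{ABvanishing} supplies the required vanishing for $d>n-r$, and then explicitly states Theorem~\ref{residuethdist} as a restatement of Theorem~\ref{residueth} in this context, without giving a separate proof. Your write-up simply spells out in more detail why the adaptedness hypotheses of Section~4 are met.
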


\lsection{An example}\label{example}

In this section, we give an example of the Atiyah residue  of a singular distribution on the normal bundle of an invariant submanifold.

We start with the $1$-form

\[
\o=z\,dx+z\,dy-y\,dz
\]
on $\C^3$ with coordinates $(x,y,z)$. It defines a corank one singular  distribution on $\C^3$ with singular set $\{y=z=0\}$. As generators of its annihilator,  we may take the vector fields
\begin{equationth}\label{vfgen}
v_1=y\frac \partial {\partial y}+z\frac \partial {\partial z}\quad\text{and}\quad v_2=\frac \partial {\partial x}-\frac \partial {\partial y}.
\end{equationth}

 It leaves the plane $\{z=0\}$ invariant. Note that from $\o\wedge d\o=-z\,dx\wedge dy\wedge dz$, we see that $\o$ defines a contact structure on $\C^3$ with singular set $\{z=0\}$ (Martinet hypersurface). We will see that the first Atiyah class of the normal bundle of the (projectivized) Martinet hypersurface is localized at the singular set of the corresponding distribution.

Now we projectivize everything. Thus let $\P^3$ be the complex projective space of dimension three with homogeneous coordinates $\z=(\zeta_0 :\zeta_1 :\zeta_2 :\zeta_3)$. The projective space $\P^3$ is covered by four open sets $W^{(i)}$, $0\le i\le 3$, given by $\zeta_i\ne 0$. We take the original affine space $\C^3$  as $W^{(0)}$ with $x=\zeta_1/\zeta_0$, $y=\zeta_2/\zeta_0$ and $z=\zeta_3/\zeta_0$.

We consider the corank one distribution $\G$ on $\P^3$ naturally obtained as an extension of the above\,:

\vv

\noindent(0)
On $W^{(0)}$, ${\cal G}$ is defined by $\o_0=z\,dx+z\,dy-y\,dz$ as given before.

\vv

\noindent(1) On $W^{(1)}$, we set $x_1=\z_0/\z_1$, $y_1=\z_3/\z_1$ and $z_1=\z_2/\z_1$. Then ${\cal G}$ is defined by
\[
\o_1=-y_1\,dx_1-x_1z_1\,dy_1+x_1y_1\,dz_1.
\]

\vv

\noindent(2) On $W^{(2)}$, we set $x_2=\z_3/\z_2$, $y_2=\z_0/\z_2$ and $z_2=\z_1/\z_2$. Then ${\cal G}$ is defined by
\[
\o_2=-y_2\,dx_2-x_2z_2\,dy_2+x_2y_2\,dz_2.
\]

\vv

\noindent(3) On $W^{(3)}$, we set $x_3=\z_2/\z_3$, $y_3=\z_1/\z_3$ and $z_3=\z_0/\z_3$. Then ${\cal G}$ is defined by
\[
\o_3=z_3\,dx_3+z_3\,dy_3-y_3\,dz_3.
\]

Note that $\o_i=(\z_j/\z_i)^3\o_j$ in $W^{(i)}\cap W^{(j)}$ so that the conormal sheaf of the distribution $\G$ is locally free of rank one and, as a line bundle, it  is three times  the hyperplane bundle on $\P^3$.
Let $\F=\G^a$ be the annihilator of $\G$, which defines a singular distribution of rank two on $\P^3$.
The  singular set $S(\F)$ of $\F$, which coincides with that of $\G$, has three irreducible components $S_1=\{\zeta_2=\zeta_3=0\}$, $S_2=\{\zeta_0=\zeta_3=0\}$ and $S_3=\{\zeta_0=\zeta_1=0\}$. We have a subbundle $F_0$ of rank $2$ of $T\P^3$ on $\P^3\setminus S(\F)$ defining $\F$ away from $S(\F)$.

The distribution $\F$ leaves the hyperplane $V=\{\z_3=0\}\simeq\P^2$ invariant and we work on $V$.
In fact  the distribution $\F$ also  leaves  the singular hypersurface $\{\zeta_0\zeta_3=0\}$, which contains the whole $S(\F)$, invariant. This case will be treated elsewhere \cite{Su9}.

Thus we consider the singular distribution $\F_V=\F\otimes {\cal O}_V$ on $V$, whose singular set $S$ is given by $S=S(\F)\cap V=S_1\cup S_2$. We let $P=(0 :1 :0 :0)$, which is the intersection point of $S_1$ and $S_2$.
 The restriction  of the bundle $F_{V,0}=F_0|_V$ defines $\F_V$ on $U_0=V\setminus S$. As is shown in Section \ref{CSaction},
the normal bundle $N_V$ of  $V$ in $\P^3$ admits a partial connection along $F_{V,0}$ on $U_0$ and the first Atiyah class $a^1(N_{V})$ is localized near $S$ and yield an ``Atiyah residue".


Note that, although the first Chern class $c^1(N_V)$ is not  a priori localized in this context, it has the ``Atiyah localization" and  the ``Atiyah residue", since it coincides with $a^1(N_{V})$,  $V$ being compact K\"ahler (see Remarks \ref{Chernnonvanish} and \ref{finalrmk} below).

To describe the localization more precisely, we need the \v Cech-Dolbeault cohomology theory for coverings involving more than two open sets, as $S$ is singular in our case.  We briefly recall what is needed in our case.


Let $U_0=V\setminus S$ be as above and let $U_1$, $U_2$ and $U_3$ be neighborhoods of $S_1\setminus\{P\}$, $S_2\setminus\{P\}$ and $P$ in $V$, respectively, such that
$U_1\subset W^{(0)}$, $U_2\subset W^{(2)}$ and $U_3\subset W^{(1)}$.
Then ${\cal U}=\{U_0,\dots,U_3\}$ is a covering of $V$ and ${\cal U}'=\{ U_1,U_2,U_3\}$ is a covering of $U'=U_1\cup U_2\cup U_3$, which is an open neighborhood of $S$ in $V$.
Letting $U_{ij}=U_i\cap U_j$ and $U_{ijk}=U_i\cap U_j\cap U_k$, we set
\begin{equationth}\label{CDcochains}
A^{p,q}(\U)=\oplus_{i} A^{p,q}(U_i)\oplus_{i,j} A^{p,q-1}(U_{ij})\oplus_{i,j,k} A^{p,q-2}(U_{ijk}),
\end{equationth}
where in the first sum, $0\le i\le 3$, in the second, $0\le i<j\le 3$ and in the third, $0\le i<j<k\le 3$.
The differential operator
\[
\bar D :A^{p,q}(\U)\lra A^{p,q+1}(\U)
\]
is defined by
\[
\bar D(\sigma_i,\sigma_{ij},\sigma_{ijk})=(\bar\partial\sigma_i,\sigma_j-\sigma_i-\bar\partial\sigma_{ij},\sigma_{jk}-\sigma_{ik}+\sigma_{ij}+\bar\partial\sigma_{ijk}).
\]

The $q$-th cohomology of the complex $(A^{p,*}(\U),\bar D)$ is the \v Cech-Dolbeault cohomology $H^{p,q}_{\bar D}({\cal U})$ of $\U$ of type $(p,q)$, which is shown to be canonically isomorphic to the Dolbeault cohomology $H^{p,q}_{\bar \partial}(V)$ of $V$ (cf. Theorem \ref{DCD}).

Likewise we have the cohomology $H^{p,q}_{\bar D}({\cal U}')$ of the complex $(A^{p,*}(\U'),\bar D)$ by omitting $U_0$ in the above.

Also, setting $A^{p,q}(\U,U_0)=\{\,\sigma\in A^{p,q}(\U)\mid\sigma_0=0\,\}$, we have the relative cohomology $H^{p,q}_{\bar D}({\cal U},U_0)$, which we also denote by $H^{p,q}_{\bar\partial}(V,V\setminus S)$.

The Atiyah classes are defined in the \v Cech-Dolbeault cohomology as in Subsection \ref{AtiyahinCD}, taking a $(1,0)$-connection on each open set and making use of difference forms.
In our case, the first Atiyah class $a^1(N_V)$ is represented by the cocycle $a^1(\na_*)$  in
\begin{equationth}\label{CD1.1}
A^{1,1}(\U)=\oplus_{i} A^{1,1}(U_i)\oplus_{i<j} A^{1,0}(U_{ij}),
\end{equationth}
(note that $A^{p,q-2}(U_{ijk})=0$ in (\ref{CDcochains}), if $(p,q)=(1,1)$) given by
\[
a^1(\na_*)=(a^1(\na_i),a^1(\na_i,\na_j)),
\]
with $\na_i$ a $(1,0)$-connection on $U_i$. If we take  an $F_{V,0}$-connection as $\na_0$,  we have $a^1(\na_0)=0$ (cf Theorem \ref{ABvanishing}).  Hence $a^1(\na_*)$   is in
$A^{1,1}(\U,U_0)$ and defines the localization $a^1(N_{V},\F_V)$ in $H^{1,1}_{\bar D}({\cal U},U_0)$.

Recall that $V$ is defined by $\z_3=0$ in $\P^3$. Thus, in $W^{(0)}$  it is defined by $z=0$ with $(x,y)$  coordinates
on $W^{(0)}\cap V (\supset U_1)$, in $W^{(2)}$ it is defined by $x_2=0$ with $(y_2,z_2)$  coordinates
on $W^{(2)}\cap V (\supset U_2)$ and in $W^{(1)}$  it is defined by $y_1=0$ with $(x_1,z_1)$  coordinates
on $W^{(1)}\cap V (\supset U_3)$.

\begin{proposition}\label{locexplicit} Let $\F$ be the singular distribution on $\P^3$ as above. It leaves the hyperplane $V$ given by $\z_3=0$ invariant.
We have the localization $a^1(N_{V},\F_V)$ in $H^{1,1}_{\bar D}({\cal U},U_0)$ of $a^1(N_{V})$ in $H^{1,1}_{\bar D}({\cal U})=H^{1,1}_{\bar\partial}(V)$. By a suitable choice of connections $\na_i$, it is represented by the \v Cech-Dolbeault cocycle $a^1(\na_*)=(a^1(\na_i),a^1(\na_i,\na_j))$ given by
\[
\begin{aligned}
&a^1(\na_i)=0, \ 0\le i\le 3,\qquad a^1(\na_0,\na_1)=\frac {\sqrt{-1}} {2\pi}\frac {dx+dy} y\\
&a^1(\na_0,\na_2)=\frac {\sqrt{-1}} {2\pi}\left(z_2\frac {dy_2} {y_2}-dz_2\right),\qquad a^1(\na_0,\na_3)=-\frac {\sqrt{-1}} {2\pi}\left(\frac {dx_1} {x_1z_1}-\frac {dz_1} {z_1}\right)\\
&a^1(\na_1,\na_2)=\frac {\sqrt{-1}} {2\pi}\frac {dy_2}{y_2},\qquad a^1(\na_1,\na_3)=\frac {\sqrt{-1}} {2\pi}\frac {dx_1}{x_1},\qquad a^1(\na_2,\na_3)=\frac {\sqrt{-1}} {2\pi}\frac {dz_1}{z_1}.
\end{aligned}
\]
\end{proposition}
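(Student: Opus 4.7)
The plan is to pick explicit $(1,0)$-connections on each of $U_0, U_1, U_2, U_3$, check that $a^1(\na_i) = O$ for every $i$, and then compute each of the six difference forms on the double overlaps using the line-bundle reduction $a^1(\na_a,\na_b) = \frac{\sqrt{-1}}{2\pi}(\theta_b-\theta_a)$.

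On $U_0 = V\setminus S$, the rank of $\F$ equals the dimension of $V$, so the inclusion $F_{V,0}\subset TV|_{U_0}$ is an equality and the holomorphic partial connection $\mbox{\boldmath $\delta$}$ furnished by Theorem~\ref{holo-normal} is in fact a full holomorphic connection on the line bundle $N_V|_{U_0}$. I take $\na_0$ to be the $(1,0)$-connection it induces (Remark~\ref{holoCinfty}); since its local connection matrix is a holomorphic $(1,0)$-form, the $(1,1)$-part of its curvature vanishes and $a^1(\na_0)=O$ (equivalently, the $d=1>0=n-r$ case of Theorem~\ref{ABvanishing}). On each $U_i$ with $i\ge 1$ I use the obvious holomorphic frame of $N_V$ inherited from the ambient chart, namely $\bar s_1=\pi(\partial_z)$ on $U_1\subset W^{(0)}$, $\bar s_2=\pi(\partial_{x_2})$ on $U_2\subset W^{(2)}$, and $\bar s_3=\pi(\partial_{y_1})$ on $U_3\subset W^{(1)}$, and let $\na_i$ be the $\bar s_i$-trivial $(1,0)$-connection. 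Then $a^1(\na_i)=O$ is immediate.

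To compute the connection matrix $\theta_0$ of $\na_0$ in each chart I apply the defining formula $\mbox{\boldmath $\delta$}(\bar s)(u)=\pi([\tilde u,\tilde s]|_V)$. In $W^{(0)}$ I take the lifts $v_1,v_2$ of (\ref{vfgen}) and the frame $\bar s_1$; the brackets $[v_1,\partial_z]=-\partial_z$ and $[v_2,\partial_z]=O$, together with the dual basis $\alpha_1=(dx+dy)/y$, $\alpha_2=dx$ of $(v_1|_V,v_2|_V)^*$, give $\theta_0=-(dx+dy)/y$. For $W^{(1)}$ and $W^{(2)}$ I replace the rational generators of $\F$ by holomorphic ones valid near $V$, obtained by clearing the denominators that appear when one solves $\o_i=0$: for instance, in $W^{(1)}$ the holomorphic generators $-x_1z_1\partial_{x_1}+y_1\partial_{y_1}$ and $x_1\partial_{x_1}+\partial_{z_1}$ bracket with $\partial_{y_1}$ to give $-\partial_{y_1}$ and $O$, producing $\theta_0=dx_1/(x_1z_1)-dz_1/z_1$ with respect to $\bar s_3$; the analogous calculation in $W^{(2)}$ yields the corresponding expression with respect to $\bar s_2$.

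The difference forms then follow from the formula above, after expressing both connection matrices in a common frame via the rule $\theta\mapsto\theta+dA/A$ from~(\ref{framechange}). The transition ratios read off from $z=x_2/y_2=y_1/x_1$ restricted to $V$ yield $\bar s_2=(1/y_2)\bar s_1$ on $U_{12}$, $\bar s_3=(1/x_1)\bar s_1$ on $U_{13}$, and $\bar s_3=z_2\bar s_2$ on $U_{23}$; substituting and simplifying gives each of the listed formulas. The main bookkeeping obstacle is keeping the three change-of-frame factors straight while re-expressing everything in the coordinates of the target overlap. As a sanity check, on every triple overlap the algebraic identity $a^1(\na_i,\na_k)=a^1(\na_i,\na_j)+a^1(\na_j,\na_k)$ holds strictly (there is no room for a $\bar\partial$-exact remainder in type $(1,1)$), so any one of the six difference forms can be recovered from the other two.
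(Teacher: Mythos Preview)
Your proposal is correct and follows essentially the same route as the paper: the same choice of $\na_0$ (the Camacho--Sad partial connection, which here is a full holomorphic connection since $\mathrm{rank}\,F_{V,0}=\dim V$), the same frame-trivial connections $\na_i$ for $i\ge 1$, the same bracket computation for $\theta_0$ in each chart, and the same use of the transition ratios $\nu_2=(1/y_2)\nu_1$, $\nu_3=(1/x_1)\nu_1$, $\nu_3=(1/z_1)\nu_2$ to get $a^1(\na_i,\na_j)$ for $i,j\ge 1$. Your explicit holomorphic generators in $W^{(1)}$ and the dual-basis bookkeeping simply flesh out what the paper abbreviates as ``similar computations''; the cocycle check at the end is a nice addition but not in the paper.
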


\begin{proof} By taking  an $F_{V,0}$-connection for $N_V$ on $U_0$ as $\na_0$, we have $a^1(\na_i)=0$ as above.
We have the exact sequence
\[
0\lra TV\lra T\P^3|_V\overset{\pi}\lra N_{V}\lra 0.
\]

On each of $U_1$, $U_2$ and $U_3$, the bundle $N_V$ is trivial and we may take $\nu_1=\pi(\frac\partial {\partial z})$, $\nu_2=\pi(\frac\partial {\partial x_2})$ and
$\nu_3=\pi(\frac\partial {\partial y_1})$, respectively, as a frame of $N_V$. Let $\na_i$ be the connection trivial with respect to $\nu_i$. Then we have $a^1(\na_i)=0$, $1\le i\le 3$.

To compute the difference forms $a^1(\na_i,\na_j)$,  we first make the following observation (cf. Subsection \ref{subA}).
Let $\t_i$ be the connection matrix (form, in this case) of $\na_i$ with respect to some holomorphic frame $\nu$ of $N_V$. Then, since the $\t_i$'s are of type $(1,0)$,
\begin{equationth}\label{difference1}
a^1(\na_i,\na_j)=c^1(\na_i,\na_j)=\frac {\sqrt{-1}} {2\pi}(\t_j-\t_i).
\end{equationth}

Moreover, if $\tilde\nu=a\nu$ is another holomorphic frame and if the $\tilde\t_i$'s are corresponding connection forms, we have (cf. (\ref{framechange}))
\begin{equationth}\label{change1}
\tilde\t_i=\t_i+\frac {da} a.
\end{equationth}


We first compute $a^1(\na_0,\na_1)$. For this, we find the connection forms $\t_0$ and $\t_1$ of $\na_0$ and $\na_1$ with respect to the frame $\nu_1$. Since $\t_1=0$, we only need to find $\t_0$. Note that $U_{01}\subset W^{(0)}$, where we may take the vector fields $v_1$ and $v_2$ in (\ref{vfgen}) as generators of $\F$.
We set
\[
u_1=v_1|_V=y\frac\partial{\partial y}\quad\text{and}\quad u_2=v_2|_V=\frac \partial {\partial x}-\frac \partial {\partial y}.
\]

Since $\theta_0$ is  of type $(1,0)$, we may write as $\theta_0=f\,dx+g\,dy$.
Then, on the one hand we have $\na_0(\nu_1)(u_1)=yg\cdot\nu_1$ and $\na_0(\nu_1)(u_2)=(f-g)\cdot\nu_1$.
On the other hand by definition,
\[
\na_0(\nu_1)(u_1)=\pi\left(\left[y\frac\partial{\partial y}+z\frac\partial{\partial z},\frac\partial{\partial z}\right]{\vert_V}\right)=-\nu_1,
\]
and
\[
\na_0(\nu_1)(u_2)=\pi\left(\left[\frac\partial{\partial x}-\frac\partial{\partial y},\frac\partial{\partial z}\right]|_V\right)=0.
\]

Hence we get
\[
\t_0=-\frac {dx+dy} y,
\]
which gives the expression for $a^1(\na_0,\na_1)$ by (\ref{difference1}).





Similar computations show that the connection forms of $\na_0$ with respect to the frames $\nu_2$ and $\nu_3$ are, respectively,
$-z_2\frac {dy_2} {y_2}+dz_2$ and $\frac {dx_1} {x_1z_1}-\frac {dz_1} {z_1}$, which give
the expressions for $a^1(\na_0,\na_2)$ and $a^1(\na_0,\na_3)$.

Finally  the relations $\nu_2=\frac 1 {y_2}\nu_1$, $\nu_3=\frac 1 {x_1}\nu_1$ and $\nu_3=\frac 1 {z_1}\nu_2$ give the expressions for $a^1(\na_1,\na_2)$, $a^1(\na_1,\na_3)$ and $a^1(\na_2,\na_3)$ by (\ref{change1}).

\end{proof}

\begin{remark}\label{Chernnonvanish} From the above, we see that the curvature form of $\na_0$ with respect to $\nu_1$ is given by
\[
\kappa_0=d\t_0+\t_0\wedge\t_0=-\frac {dx\wedge dy} {y^2}.
\]
 Since it has no $(1,1)$-component, we verify $a^1(\na_0)=0$, while $c^1(\na_0)=\frac {\sqrt{-1}}{2\pi}\kappa_0$ does not vanish.
\end{remark}

We now try to find the corresponding residue.
For this, we first consider the cup product in our case. Recalling (\ref{CDcochains}) and (\ref{CD1.1}), it is a pairing
\[
A^{1,1}(\U)\times A^{1,1}(\U)\lra A^{2,2}(\U)
\]
given by
\[
(\sigma_i, \sigma_{ij},0)\smallsmile (\tau_i, \tau_{ij},0)=(\sigma_i\wedge\tau_i,\sigma_i\wedge\tau_{ij}+\sigma_{ij}\wedge\tau_j,-\sigma_{ij}\wedge\tau_{jk}).
\]

This induces a pairing $H^{1,1}_{\bar D}({\cal U})\times H^{1,1}_{\bar D}({\cal U})\lra H^{2,2}_{\bar D}({\cal U})$, which followed by integration $\int_V\colon H^{2,2}_{\bar D}({\cal U})\simeq H^{2,2}_{\bar \partial}(V)\lra\C$ defines
the Kodaira-Serre duality. 

In the relative case, we have $\sigma_0=0$ and the above cup product involves only $(\tau_i, \tau_{ij})$ with $i\ge 1$. Hence we have
the pairing
\[
A^{1,1}(\U,U_0)\times A^{1,1}({\cal U}')\lra A^{2,2}(\U,U_0).
\]
This in turn induces the pairing
\[
H^{1,1}_{\bar D}(\U,U_0)\times H^{1,1}_{\bar D}(\U')\lra H^{2,2}_{\bar D}(\U,U_0),
\]
which, followed by integration, defines the $\bar\partial$-Alexander homomorphism
\[
\bar A:H^{1,1}_{\bar D}(\U,U_0)\lra H^{1,1}_{\bar D}(\U')^*
\]
and we have a commutative diagram as in Proposition \ref{commutativity}, to which we come back  below (cf. (\ref{finalCD})).

We look the the $\bar\partial$-Alexander homomorphism more closely. We take a ``system of honeycomb cells" $(R_i)$ adapted to $\U$, which will be  given explicitly  below.
For a class $[\sigma]$ in $H^{1,1}_{\bar D}(\U,U_0)$, $\sigma=(\sigma_i, \sigma_{ij})$, the image of $[\sigma]$ by $\bar A$ is a functional assigning to each class $[\tau]$ in $H_{\bar D}^{1,1}({\cal U}')$, $\tau=(\tau_i, \tau_{ij})$, the integral

\begin{equationth}\label{resint}
\begin{aligned}
\int_V\sigma\smallsmile\tau=&\sum_{1\le i\le 3}\left(\int_{R_i}\sigma_i\wedge\tau_i+\int_{R_{0i}}\sigma_{0i}\wedge\tau_i\right)\\
&+\sum_{1\le i<j\le 3}\left(\int_{R_{ij}}\sigma_i\wedge\tau_{ij}+\sigma_{ij}\wedge\tau_j
-\int_{R_{0ij}}\sigma_{0i}\wedge\tau_{ij}\right).
\end{aligned}
\end{equationth}
In the above, each $R_i$ has the same orientation as $V$. We set $R_{ij}=R_i\cap R_j=\partial R_i\cap\partial R_j$, which has the same orientation as $\partial R_i$ (opposite orientation of $\partial R_j$) and $R_{0ij}=R_0\cap R_{ij}=\partial R_0\cap\partial R_{ij}$, which has the same orientation as $\partial R_{0i}$.

In fact, the right hand side of (\ref{resint}) can be reduced choosing Stein open sets as $U_i$, $1\le i\le 3$, which is possible (for example, we may take as $U_1$ a tubular neighborhood of $S_1\setminus \{P\}$ in $V\cap W^{(0)}$ containing $R_1$, or even the whole $V\cap W^{(0)}\simeq\C^2$).

\begin{lemma}\label{Stein} If we choose $U_i$, $1\le i\le 3$, to be Stein, we may represent every class in $H_{\bar D}^{1,1}({\cal U}')$ by a cocycle of the form $\xi=(0, \xi_{ij})$.
\end{lemma}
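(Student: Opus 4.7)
The idea is to use the cocycle $\xi$ and modify it by a $\bar D$-coboundary to kill the components on the individual $U_i$'s, exploiting the fact that Dolbeault cohomology in positive bidegree vanishes on Stein manifolds.

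First I would unpack the definitions. From (\ref{CDcochains}) applied to $\U'=\{U_1,U_2,U_3\}$ we have
\[
A^{1,1}(\U') = \bigoplus_{1\le i\le 3} A^{1,1}(U_i)\oplus \bigoplus_{1\le i<j\le 3} A^{1,0}(U_{ij})\oplus A^{1,-1}(U_{123}),
\]
where the last summand is zero, so a cochain is $\xi=(\xi_i,\xi_{ij})$. The condition $\bar D\xi=0$ translates into $\bar\partial\xi_i=0$ on $U_i$, $\xi_j-\xi_i=\bar\partial\xi_{ij}$ on $U_{ij}$, and $\xi_{jk}-\xi_{ik}+\xi_{ij}=0$ on $U_{ijk}$. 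Similarly, since $A^{1,-1}(U_{ij})=0$ and $A^{1,-2}(U_{123})=0$, we have $A^{1,0}(\U')=\bigoplus_i A^{1,0}(U_i)$, and for $\eta=(\eta_i)\in A^{1,0}(\U')$ the definition of $\bar D$ gives
\[
\bar D\eta = \bigl(\bar\partial\eta_i,\;\eta_j-\eta_i,\;0\bigr).
\]

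Next I would invoke Dolbeault vanishing on Stein open sets. Since $U_i$ is Stein, Cartan's Theorem B together with the Dolbeault isomorphism yields $H^{1,1}_{\bar\partial}(U_i)=0$. Each $\xi_i$ is a $\bar\partial$-closed $(1,1)$-form on $U_i$, hence there exists $\eta_i\in A^{1,0}(U_i)$ with $\bar\partial\eta_i=\xi_i$.

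Setting $\eta=(\eta_i)\in A^{1,0}(\U')$ and forming $\tilde\xi=\xi-\bar D\eta$, we obtain
\[
\tilde\xi = \bigl(\xi_i-\bar\partial\eta_i,\;\xi_{ij}-(\eta_j-\eta_i)\bigr) = \bigl(0,\;\xi_{ij}-\eta_j+\eta_i\bigr),
\]
which is a cocycle of the desired form $(0,\tilde\xi_{ij})$ representing the same class as $\xi$ in $H^{1,1}_{\bar D}(\U')$. The only nontrivial ingredient is the Steinness of the $U_i$'s guaranteeing the solvability of the $\bar\partial$-equations on each piece; there is no real obstacle, and the cocycle conditions on $\xi_{ij}$ and $\xi_{ijk}$ are preserved automatically since $\bar D^2=0$.
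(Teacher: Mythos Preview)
Your proof is correct and follows essentially the same approach as the paper: use Steinness of each $U_i$ to solve $\bar\partial\eta_i=\xi_i$ and subtract the coboundary $\bar D(\eta_i)$ to kill the $U_i$-components. The paper's argument is identical up to notation (it writes $\rho_i$ for your $\eta_i$ and phrases the conclusion as $\tau=\xi+\bar D\rho$).
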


\begin{proof} From $\bar D\tau=0$, we have $\bar\partial\tau_i=0$, $1\le i\le 3$. Since each $U_i$ is Stein, there exist a $(1,0)$-form $\rho_i$ such that $\tau_i=\bar\partial\rho_i$.
If we set $\xi=(0,\xi_{ij})$ with
\[
\xi_{ij}=\tau_{ij}+\rho_i-\rho_j,
\]
Then we have $\tau=\xi+\bar D\rho$, $\rho=(\rho_i,0)$.
\end{proof}

If we use the representative as  above, the right hand side of (\ref{resint}) becomes
\begin{equationth}\label{funct}
\sum_{1\le i<j\le 3}\left(\int_{R_{ij}}\sigma_i\wedge\xi_{ij}
-\int_{R_{0ij}}\sigma_{0i}\wedge\xi_{ij}\right).
\end{equationth}

Recall that the residue ${\rm Res}_{a^1}(\F_V,N_V;U')$ of $\F_V$ with respect to $a^1$ for $N_V$ on $U'$ is the image of the localization
$a^1(N_V,\F_V)$.

\begin{proposition} If we choose  connections $\na_i$ as in Proposition \ref{locexplicit} and a representative $\xi$ of each class in $H^{1,1}(\U')$ as in Lemma \ref{Stein}, the residue ${\rm Res}_{a^1}(\F_V,N_V;U')$ is the functional assigning  to  $[\xi]$ the value
\[
-\sum_{1\le i<j\le 3}\int_{R_{0ij}}a^1(\na_0,\na_i)\wedge\xi_{ij}.
\]
\end{proposition}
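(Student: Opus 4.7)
The plan is to invoke the explicit formula (\ref{funct}) for the $\bar\partial$-Alexander pairing against a class represented in the form $\xi=(0,\xi_{ij})$ given by Lemma~\ref{Stein}, and then to substitute the specific cocycle exhibited in Proposition~\ref{locexplicit}. By definition, ${\rm Res}_{a^1}(\F_V,N_V;U')$ is the image of the relative class $a^1(N_V,\F_V)\in H^{1,1}_{\bar D}(\U,U_0)$ under the $\bar\partial$-Alexander homomorphism $\bar A$; once cocycle representatives of both sides of the pairing are chosen, the value on $[\xi]$ is literally the right-hand side of (\ref{funct}), so the result will follow from a bookkeeping argument.

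The key observation is that the cocycle $a^1(\na_*)=\bigl(a^1(\na_i),a^1(\na_0,\na_i),a^1(\na_i,\na_j)\bigr)$ of Proposition~\ref{locexplicit} has the pleasant property that $a^1(\na_i)=0$ for \emph{every} $i=0,1,2,3$, not only for the special connection~$\na_0$. Indeed, for $i=0$ this is the vanishing coming from the fact that $\na_0$ is an $F_{V,0}$-connection (Theorem~\ref{ABvanishing}), while for $i=1,2,3$ each $\na_i$ was chosen to be trivial with respect to the holomorphic frame $\nu_i$ of $N_V|_{U_i}$, so its connection matrix is the zero $(1,0)$-form and thus $K^{1,1}=\bar\partial\theta_i=0$. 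Substituting $\sigma_i=0$ for $1\le i\le 3$ and $\sigma_{0i}=a^1(\na_0,\na_i)$ into (\ref{funct}) immediately annihilates the integrals $\int_{R_{ij}}\sigma_i\wedge\xi_{ij}$ and leaves
\[
-\sum_{1\le i<j\le 3}\int_{R_{0ij}}a^1(\na_0,\na_i)\wedge\xi_{ij},
\]
which is precisely the asserted value of the residue functional on $[\xi]$.

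There is essentially no obstacle in executing this: the argument is a direct substitution into an already-established formula. The only point requiring care is to confirm that the sign and orientation conventions built into (\ref{funct}) — in particular that $R_{0ij}$ carries the orientation of $\partial R_{0i}$ — match those used in the cup-product definition of~$\bar A$; but these have already been fixed in the paragraph preceding (\ref{funct}), so no new verification is needed. The content of the proof is really the vanishing statement of Proposition~\ref{locexplicit}, which allows the general formula (\ref{funct}) to collapse to a sum indexed only by triple intersections.
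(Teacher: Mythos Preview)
Your argument is correct and matches the paper's own proof, which is the one-line observation that the proposition follows from $a^1(\na_i)=0$ together with formula~(\ref{funct}). You have simply spelled out the substitution in more detail than the paper does.
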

\begin{proof} The proposition follows from $a^1(\na_i)=0$ and (\ref{funct}).
\end{proof}

The domains of integrations  $R_{0ij}$ can be given  explicitly, for example,  as follows.
Let $\delta$  be positive number with $\delta^2<1$, and set
\[
\begin{aligned}
R_3&=\{\,\z\in V\mid  \  |\z_0|^2+|\z_2|^2\le\delta^2\, |\z_1|^2\,\},\qquad
R_1=\{\,\z\in V\mid \   |\z_2|^2\le\delta^2\, |\z_0|^2\,\}\setminus {\rm Int}\,R_3,\\
R_2&=\{\,\z\in V\mid \   |\z_0|^2\le\delta^2\, |\z_2|^2\,\}\setminus {\rm Int}\, R_3,\qquad
R_0=U_0\setminus (\cup_{i=1}^3 {\rm Int}\,R_i\cup_{1\le i<j\le 3}{\rm Int}\,R_{ij}).
\end{aligned}
\]


From $\delta<1$, we see that $R_{12}=\emptyset$ and thus $R_{012}=\emptyset$.
We first express $R_{013}$ explicitly. As a set, it is given by
\[
|y|=\delta,\quad 1+|y|^2=\delta^2\,|x|^2\quad\text{and}\quad z=0.
\]
Setting $\delta'=\frac {\sqrt{1+\delta^2}}\delta$, we have
\[
R_{013}=\{\,(x,y)\mid |x|=\delta', |y|=\delta\,\},
\]
oriented so that ${\rm arg}\,x\wedge {\rm arg}\, y$ is negative.
Similarly we have
\[
R_{023}=\{\,(y_2,z_2)\mid |y_2|=\delta,\  |z_2|=\delta'\,\},
\]
which is oriented so that ${\rm arg}\,y_2\wedge {\rm arg}\, z_2$ is positive.

Now we consider the commutative diagram

 \begin{equationth}\label{finalCD}
  \begin{CD}
 H^{1,1}_{\bar D}(\U,U_0)@>j^*>>   H^{1,1}_{\bar D}(\U) \simeq  H^{1,1}_{\bar\partial}(V)= H^{1,1}_{\bar\partial}(\P^2)\simeq H^2(\P^2,\C)\\
@VV{\bar A_V}V       @VV{KS_V=P_V} V    \\
H^{1,1}_{\bar D}(\U')^*@>i_*>>H^{1,1}_{\bar D}(\U)^*\simeq H^{1,1}_{\bar\partial}(V)^*=H^{1,1}_{\bar\partial}(\P^2)^*\simeq H_2(\P^2,\C).
\end{CD}
\end{equationth}

The normal bundle $N_V$ of  $V$ in $\P^3$ is isomorphic to the hyperplane bundle $H$ on $V=\P^2$.  Since $\P^2$ is compact K\"ahler, we know that the first Atiyah class $a^1(N_{V})$ in $H^{1,1}_{\bar\partial}(V)=H^2(\P^2,\C)\simeq\C$ coincides with the first Chern class  $c^1(N_V)=c^1(H)$, the generator of the cohomology.

We  try to find $i_*{\rm Res}_{c^1}(\F,N_V;S) $ and verify the Residue Theorem \ref{residueth}. Recall that the isomorphism $H^{1,1}_{\bar\partial}(\P^2)\lra H^{1,1}_{\bar D}(\U) $ is induced by $\tau\mapsto (\tau_i,\tau_{ij})=(\tau,0)$. Note that $H^{1,1}_{\bar\partial}(\P^2)\simeq\C$, which is generated  by  the class of
\[
\tau_0=\frac{\sqrt{-1}}{2\pi}\partial\bar\partial\log\Vert\z\Vert^2
\]
(cf. {\it e.g.}, \cite{GH}).
For $\tau_0$ we may take, as $\rho_i$ in the proof of Lemma \ref{Stein}, the forms
\[
\rho_1=-\frac{\sqrt{-1}}{2\pi}\frac{\bar x\,dx+\bar y\,dy}{1+|x|^2+|y|^2},\
\rho_2=-\frac{\sqrt{-1}}{2\pi}\frac{\bar y_2\,dy_2+\bar z_2\,dz_2}{1+|y_2|^2+|z_2|^2},\
\rho_3=-\frac{\sqrt{-1}}{2\pi}\frac{\bar x_1\,dx_1+\bar z_1\,dz_1}{1+|x_1|^2+|z_1^2}
\]
and we compute
\[
\xi_{13}=\rho_1-\rho_3=-\frac{\sqrt{-1}}{2\pi}\,\frac{dx}{x},\quad
\xi_{23}=\rho_2-\rho_3=-\frac{\sqrt{-1}}{2\pi}\,\frac{dz_2}{z_2}.
\]

Thus, to the canonical generator $[\tau_0]$, the residue assigns the value
\[
\begin{aligned}
-\int_{R_{013}}a^1(\na_0,\na_1)&\wedge\xi_{13}-\int_{R_{023}}a^1(\na_0,\na_2)\wedge\xi_{23}\\
&=\left(\frac {\sqrt{-1}} {2\pi}\right)^2\left\{\int_{R_{013}}\left(\frac {dx+dy} {y}\right)\wedge\frac{dx}{x}+\int_{R_{023}}\left(z_2\frac {dy_2} {y_2}-dz_2\right)\wedge\frac{dz_2}{z_2}\right\}\\
&=-\left(\frac {\sqrt{-1}} {2\pi}\right)^2\int_{R_{013}}\frac {dx\wedge dy} {xy}\\&=1,
\end{aligned}
\]
as expected, since $R_{013}$ is given by  $|x|=\delta'$ and $|y|=\delta$, oriented so that ${\rm arg}\,x\wedge {\rm arg}\, y$ is negative.


\begin{remark}\label{finalrmk} Although the first Chern class $c^1(N_V)$ is not localized as a Chern class (cf. Remark \ref{Chernnonvanish}), it has the ``Atiyah localization" and the ``Atiyah residue".
\end{remark}

\bibliographystyle{plain}

\end{document}